\newtheorem{theorem}{Theorem}[section]
\newtheorem{proposition}[theorem]{Proposition}
\newtheorem{lemma}[theorem]{Lemma}
\theoremstyle{definition}
\newtheorem{definition}[theorem]{Definition}
\newtheorem{remark}[theorem]{Remark}
\newtheorem*{rema*}{Remark}
\theoremstyle{problem}
\newcommand{\Aut}{\mathrm{Aut}}
\newcommand{\Opp}{\mathrm{Opp}}
\newcommand{\Ker}{\mathrm{Ker}}
\newcommand{\proj}{\mathrm{proj}}
\newcommand{\RR}{\mathbf{R}}
\newcommand{\CAT}{\mathrm{CAT}}
\newcommand{\cat}{$\mathrm{CAT}(0)$\xspace}
\newcommand{\Ch}{\mathrm{Ch}}
\newcommand{\Stab}{\mathrm{Stab}}
\newcommand{\bd}{\partial}
\newcommand{\Id}{\operatorname{Id}}
\newcommand{\Top}{\operatorname{\mathrm{Top}}}
\newcommand{\A}{\mathrm{\bold{A}}}
\newcommand{\blue}[1]{{#1}}
\newcommand{\red}[1]{{#1}}
\newcommand{\green}[1]{{#1}}
\def\og{\leavevmode\raise.3ex\hbox{$\scriptscriptstyle\langle\!\langle$~}}
\def\fg{\leavevmode\raise.3ex\hbox{~$\!\scriptscriptstyle\,\rangle\!\rangle$}}
\def\ie{{\it i.e.,\/}\ }
\def\eg{{\it e.g.,\/}\ }
\def\lc{{\it l.c.\/}\ }
\def\truc{\unskip\kern 3pt\penalty 500
\hbox{\vrule\vbox to 5pt{\hrule width 4pt\vfill\hrule}\vrule}\kern
3pt}
\def\vect{\overrightarrow}
\def\parni{\par\noindent}
\def\Z{{\mathbb Z}}
\def\R{{\mathbb R}}
\def\A{{\mathbb A}}
\def\M{{\mathbb M}}
\newcommand{\g}[1]{\mathfrak{#1}} 
\def\qa{\alpha}     
\def\qd{\delta}
\def\qf{\varphi}
\def\qr{\rho}
\def\qs {\sigma}
\def\qx{\xi}
\def\QD{\Delta}
\def\QF{\Phi}
\def\QL{\Lambda}
\def\QO{\Omega}
\def\sha{{\mathcal A}}   
\def\shm{{\mathcal M}}
\def\sht{{\mathcal T}}
\title{Strongly transitive actions on affine ordered hovels}
\author[1]{Corina Ciobotaru\thanks{corina.ciobotaru@unifr.ch}}
\author[2,3]{Guy Rousseau\thanks{Guy.Rousseau@univ-lorraine.fr}}
\affil[1]{Universit\'e de Gen\`{e}ve, Section de math\'ematiques, 2--4 rue du Li\`{e}vre, 1211~Gen\`{e}ve~4, Switzerland.}
\affil[2]{Universit\'e de Lorraine, Institut \'Elie Cartan de Lorraine, UMR 7502, Vand\oe uvre-l\`es-Nancy, F-54506, France.}
\affil[3]{CNRS, Institut \'Elie Cartan de Lorraine, UMR 7502, Vand\oe uvre-l\`es-Nancy, F-54506, France.}
\date{June 21, 2016}
\begin{document}
\newcounter{qcounter}

\maketitle

\begin{abstract}
A hovel is a generalization of the Bruhat--Tits building that is associated with an almost split Kac--Moody group $G$ over a non-Archimedean local field. In particular, $G$ acts strongly transitively on its corresponding hovel $\Delta$ as well as on the building at infinity of $\Delta$, which is the twin building associated with $G$. In this paper we study strongly transitive actions of groups that act on affine ordered hovels $\Delta$ 
and give necessary and sufficient conditions such that the strong transitivity of the action on $\Delta$ is equivalent to the strong transitivity of the action of the group on its building at infinity $\partial \Delta$. Along the way a criterion for strong transitivity is given and the cone topology on the hovel is introduced. We also prove the existence of strongly regular hyperbolic automorphisms of the hovel, obtaining thus good dynamical properties on the building at infinity $\partial \Delta$.

\end{abstract}

\section{Introduction}

Hovels were introduced by Gaussent and Rousseau~\cite{GR08} and developed further by Rousseau~\cite{Rou11, Rou12}, Charignon~\cite{Cha}, Gaussent--Rousseau~\cite{GR14} and Bardy-Panse--Gaussent--Rousseau in~\cite{BPGR14}. By the Bruhat--Tits theory, to every semi-simple algebraic group over a non-Archimedean local field one associates a symmetric space, called the corresponding building. More generally, to an almost split Kac--Moody group $G$ over a field $K$ one associates a hovel, which gives a generalization of the Bruhat--Tits construction. As for buildings, a hovel is covered by apartments corresponding to the maximal split tori and every apartment is a finite dimensional real affine space endowed with a set of affine hyperplanes called walls. At a first glance, a hovel looks like the Bruhat--Tits building. 
Still, for the case of hovels, the set of all walls is not always a locally finite system of hyperplanes and  it is no longer true that any two points are contained in a same apartment. For this reason, the word ``building'' is replaced by the word ``hovel''.  Moreover, if we look at infinity of the hovel, we get a twin building, which is the analogue of the spherical building at infinity associated to the Bruhat--Tits building. 
As an application, Bardy-Panse, Gaussent and Rousseau~\cite{GR14},~\cite{BPGR14}, use the construction of the hovel to define the spherical Hecke and Iwahori--Hecke algebras associated to an almost split Kac--Moody group over a local non-Archimedean field. 
In \cite{GR08}, the hovel is used to make the link (in the representation theory of Kac--Moody groups) between the  Littelmann's path model and the Mirkovic--Vilonen cycle model.

As we mentioned above, to every almost split Kac--Moody group $G$ there is associated a natural symmetric space, the hovel, where the group $G$ acts by automorphisms and such that at infinity of the hovel we obtain the twin building corresponding to $G$.  Moreover, by construction, $G$ acts strongly transitively on its corresponding twin building and there is associated the spherical Hecke algebra, which is commutative (see Gaussent--Rousseau~\cite{GR14}). It is then natural to see to what extent the main result of Caprace--Ciobotaru~\cite{CaCi} can be obtained in the more general case of hovels. \red{For the convenience of the reader we recall this result and the definitions of strong transitivity that are involved.

\begin{definition}
Let $\Delta$ be a building \red{and $\sha$ be the (not necessarily complete) apartment system defining $\Delta$.} Let $G {\leq} \Aut(\Delta)$. We say that $G$ acts strongly transitively on $\Delta$ if $G$ acts transitively on the set  $\sha$ of apartments and if, for one apartment $A\in\sha$, the stabilizer $Stab_G(A)$ of $A$ in $G$ acts transitively on the chambers in $A$. Accordingly, the building at infinity $ \partial \Delta$ is defined with respect to $(\Delta, \sha)$.

When $\Delta= (\Delta_{-}, \Delta_{+})$ is a twin building and $G {\leq} \Aut(\Delta)$ we say that $G$ acts strongly transitively on $\Delta$ if $G$ acts transitively on the set of {all} twin apartments  $A=(A_-, A_+)$ of $\Delta$ and $\Stab_{G}(A)$ acts transitively on $\Ch(A_+)$ (so also on $\Ch(A_-)$), for every twin apartment  $A=(A_-, A_+)$ of $\Delta$.
\end{definition}}

\begin{theorem}(See \cite[Theorem~1.1]{CaCi})
\label{thm:main-thm_recall}
Let $G$ be a locally compact group acting continuously and  properly by type-preserving automorphisms on a locally finite thick Euclidean building $\Delta$ \red{(endowed with its complete apartment system)}.  Then the following are equivalent:
\begin{enumerate}[(i)]
\item
\label{thm:main-thm-i}
$G$ acts strongly transitively on $\Delta$;

\item
\label{thm:main-thm-ii}
$G$ acts strongly transitively on the spherical building at infinity $\partial \Delta$;

\item
\label{thm:main-thm-iii}
$G$ acts cocompactly on $\Delta$,  and there exists a compact subgroup $K$ of $G$ such that the Hecke algebra corresponding to $(G,K)$ is commutative (in this case the pair $(G,K)$ is called Gelfand).
\end{enumerate} 

In particular, when $G$ acts cocompactly on $\Delta$, it admits a Gelfand pair only when G acts strongly transitively on $\Delta$. In this case, its only Gelfand pairs are of the form $(G,G_x)$ where x is a special vertex of $\Delta$ and $G_x$ is the stabilizer of $x$ in $G$.
\end{theorem}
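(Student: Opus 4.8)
The plan is to prove the cycle of implications by first establishing the equivalence of (i) and (ii) and then the equivalence of (i) and (iii), bringing in the Hecke-algebraic condition last. Throughout I would exploit the dictionary between the affine building $\Delta$ and its spherical building at infinity $\partial\Delta$: since we work with the complete apartment system, every affine apartment $A$ has a well-defined boundary apartment $\partial A$, and the sector-germs based in $A$ biject with the chambers of $\partial A$. Type-preservation guarantees that chambers at infinity and special vertices of $\Delta$ carry well-defined types, which will matter for the Hecke algebra.

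For (i) $\Rightarrow$ (ii): strong transitivity on $\Delta$ means $G$ is transitive on pairs (apartment, chamber of that apartment), i.e. $G$ carries a Tits system. An element sending one such pair to another induces the corresponding map on (boundary apartment, boundary chamber); because every apartment of $\partial\Delta$ is the boundary of an affine apartment and every chamber at infinity is a sector-germ, the induced action is transitive on pairs at infinity, giving (ii). The converse (ii) $\Rightarrow$ (i) is the first delicate point. Given two apartments $A,A'$ of $\Delta$, I would use transitivity on boundary apartments to reduce to the case $\partial A=\partial A'$, and then invoke properness together with the Bruhat--Tits fixed-point theorem (a compact subgroup, or a bounded orbit, fixes a point of $\Delta$) to produce an element of $\Stab_G(\partial A)$ carrying $A$ to $A'$ and acting with the required effect on $\Ch(A)$. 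Local finiteness and properness are exactly what convert the purely combinatorial transitivity at infinity into genuine transitivity on affine apartments and their chambers.

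For (i) $\Rightarrow$ (iii): chamber-transitivity is immediate, and since a closed chamber is compact and meets every orbit, $G$ acts cocompactly. For the Gelfand property I would take $K=G_x$ with $x$ a special vertex; strong transitivity yields a Cartan-type decomposition $G=\bigsqcup_\lambda K g_\lambda K$ indexed by the $G_x$-orbits of special vertices, equivalently by the dominant cone of the vector Weyl group. Commutativity of $\mathcal{H}(G,K)$ I would then obtain from Gelfand's criterion: exhibit an involutive anti-automorphism $\tau$ of $G$ with $\tau(K)=K$ and $\tau(KgK)=KgK$ for all $g$, arising geometrically from an isometry of the model apartment based at $x$ that reverses sectors; this forces the structure constants of the convolution algebra to be symmetric, whence $\mathcal{H}(G,K)$ is commutative and $(G,G_x)$ is Gelfand.

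The hard part will be the converse (iii) $\Rightarrow$ (i): deriving strong transitivity from the mere commutativity of \emph{some} Hecke algebra together with cocompactness. Here I would first use that $K$ is compact, hence fixes a point of $\Delta$, and that cocompactness yields only finitely many $G$-orbits of simplices; the task is to upgrade algebraic commutativity into combinatorial transitivity on (apartment, chamber) pairs. My strategy would be to translate commutativity of $\mathcal{H}(G,K)$ into a multiplicity-freeness/symmetry statement about $K$-orbits on $\partial\Delta$ (or on ordered pairs of chambers at infinity), and to show that any failure of strong transitivity produces two double cosets whose convolution is asymmetric, contradicting commutativity; combined with the already-established equivalence (i) $\Leftrightarrow$ (ii), it then suffices to force strong transitivity at infinity. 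This is where the main obstacle lies and where properness, cocompactness and the fine geometry of $\partial\Delta$ must be used in an essential, quantitative way. The final ``in particular'' assertion follows formally: any Gelfand pair $(G,K)$ forces (iii) and hence (i), while a fixed-point plus maximality argument identifies $K$ as the stabilizer $G_x$ of a special vertex $x$.
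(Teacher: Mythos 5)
This statement is not proved in the paper at all: it is quoted verbatim from Caprace--Ciobotaru \cite{CaCi} (Theorem~1.1 there) purely as motivation for the hovel-theoretic generalization, so there is no internal proof to compare your attempt against. Judged on its own terms, your proposal has the right skeleton for (i) $\Rightarrow$ (ii) and for (i) $\Rightarrow$ (iii) (though the Gelfand step should be run with the weaker geometric fact that $KgK=Kg^{-1}K$, obtained from an element of $G$ swapping two special vertices, rather than postulating an involutive anti-automorphism of $G$ itself, which a general strongly transitive $G$ need not admit).

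The genuine gaps are in the two hard implications. For (ii) $\Rightarrow$ (i), ``properness plus the Bruhat--Tits fixed-point theorem'' does not by itself produce an element of $G^0_c$ carrying one apartment containing the sector-germ $c$ to another: the actual mechanism (which is exactly what the present paper reconstructs for hovels in Sections 3--6) is the existence of strongly regular hyperbolic elements in apartment stabilizers, their attracting/repelling dynamics on $\Ch(\partial\Delta)$ in the cone topology, the closedness of $G^0_c$-orbits on $\Opp(c)$ (this is where local compactness and properness enter), and the abelianness of $G_c/G^0_c$ via the Busemann homomorphism. None of these ingredients appears in your sketch, and without them the reduction ``to the case $\partial A=\partial A'$'' does not close. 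For (iii) $\Rightarrow$ (i) you explicitly defer the argument; the strategy you outline (``any failure of strong transitivity produces two double cosets whose convolution is asymmetric'') is a plausible slogan but is not substantiated, and this is precisely the part of \cite{CaCi} that requires the most work. As it stands the proposal is an outline with the two decisive implications unproved.
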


In order to generalize Theorem~\ref{thm:main-thm_recall} to hovels, one has to be aware of the fact that hovels are more difficult objects to deal with than affine buildings. The difficulties are coming from the fact that (local) chambers and (local) faces are no longer subsets of an apartment $\mathbb A$, but filters of subsets of $\mathbb A$, the hovel is not necessarily a geodesic metric space, two points are not always contained in the same apartment, and there is no natural topology on the hovel that can be induced on the full group of automorphisms. Therefore, under some assumptions, we obtain the following theorem, which is the main result of this article. In order to motivate these assumptions, we have:

\red{\begin{remark}
\label{rem::DTE_not_ST}
Let $T$ be a locally finite and regular tree and let $ \xi$ be an end of $T$. Define $\sha$ as the set of all apartments not containing $\xi$ and $G$ the group of automorphisms of $T$ fixing $\xi$. Then $G$ is $2$--transitive (equivalently, strongly transitive) on the
set $E(\sha)$ of ends of $(T,\sha)$ and for any $\eta \in E(\sha)$ the orbits of $G^{0}_{\eta}$ are closed in $E(\sha) \setminus \{\eta\} = E(T ) \setminus \{\eta, \xi\}$. But $G$ is not strongly transitive on $(T, \sha)$: any $g \in G$ stabilizing an apartment $A \in \sha$ fixes the projection of $\xi$ on $A$.
\end{remark}}

\red{\begin{definition}
\label{def::LST}
Let $(\Delta, \sha)$ be an affine ordered hovel \green{that is not a tree} and let $G$ be a vectorially Weyl subgroup of $\Aut(\Delta)$. Let $\sigma, \sigma' \subset \partial \Delta$ be a pair of opposite panels and denote by $P(\sigma, \sigma')$ the union of all apartments of $(\Delta, \sha)$ whose boundaries contain $\sigma$ and $\sigma'$. By Lemma~\ref{lem:TreeWalls} consider the panel tree $\Delta(\sigma, \sigma'):= (T(\sigma, \sigma'), \sha(\sigma, \sigma'))$ associated with $\sigma$ and $\sigma'$.  Let $G_{\sigma, \sigma'}$ be the stabilizer in $G$ of $\sigma, \sigma'$. We say that $G$ satisfies \textbf{condition (LST)} if for each pair of opposite panels $(\sigma, \sigma')$ the subgroup $G_{\sigma, \sigma'}$ acts strongly transitively on the panel tree $\Delta(\sigma, \sigma')$.
\end{definition}}

\par\blue{If we assume the apartment system "locally complete" (see section \ref{1.7} below) and $G$ strongly transitive on $\partial \Delta$, then Lemma \ref{lem:loc-comp} tells that $G$ satisfies (LST). The Remark \ref{rem::DTE_not_ST} proves that this conclusion may be wrong without an hypothesis on $\sha$. }

\begin{theorem}
\label{thm::main_theorem_intr}
Let $\red{(\Delta, \sha)}$ be a semi-discrete thick, affine ordered hovel, \red{that is not a tree.} 
 Let $G$ be a vectorially Weyl subgroup of $\Aut(\Delta)$ \red{satisfying condition (LST)} and with the property that for every $c \in \Ch(\partial \Delta)$ the $G^{0}_c$--orbits on $\Opp(c)$ are closed with respect to the cone topology on $\Ch(\partial \Delta)$. 
If $\QD$ has some factors of affine type, we ask moreover that $G$ satisfies the condition (AGT) of Remark~\ref{rem::AGT}. 
Then the following are equivalent:

\begin{enumerate}[(i)]
\item
\label{thm:main-thm-i}
$G$ acts strongly transitively on $\Delta$;

\item
\label{thm:main-thm-ii}
$G$ acts strongly transitively on the twin building at infinity $\partial \Delta$.
\end{enumerate}
\end{theorem}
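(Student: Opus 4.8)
The plan is to prove the two implications separately. I expect $(\mathrm{i})\Rightarrow(\mathrm{ii})$ to be the routine direction and $(\mathrm{ii})\Rightarrow(\mathrm{i})$ to carry the real content, consuming condition (LST), the closed-orbit hypothesis, and (in the affine case) condition (AGT).

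For $(\mathrm{i})\Rightarrow(\mathrm{ii})$, I would use that every apartment $A\in\sha$ has a twin apartment $\partial A=(\partial_{-}A,\partial_{+}A)$ at infinity and that, since $G$ is vectorially Weyl, each $g\in G$ induces a type-preserving automorphism of the twin building carrying $\partial A$ to $\partial(gA)$. Transitivity of $G$ on $\sha$ then gives transitivity on the twin apartments of $\partial\Delta$. Fixing $A$, I would argue that the image of $\Stab_{G}(A)$ in $\Aut(A)$ preserves the wall system, hence (by semi-discreteness and thickness) lies in the affine Weyl group $W^{\mathrm{aff}}=W\ltimes T$; being transitive on alcoves it must equal $W^{\mathrm{aff}}$, so its vectorial part is all of $W$. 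Since $W$ acts simply transitively on the Weyl chambers of $A$, i.e. on $\Ch(\partial_{+}A)$, the stabilizer $\Stab_{G}(A)\le\Stab_{G}(\partial A)$ is already chamber-transitive on the twin apartment, which is exactly strong transitivity on $\partial\Delta$.

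For $(\mathrm{ii})\Rightarrow(\mathrm{i})$, transitivity on twin apartments is immediate, but it must be upgraded to transitivity on $\sha$ together with alcove-transitivity inside a fixed apartment; Remark~\ref{rem::DTE_not_ST} shows this upgrade genuinely fails without a hypothesis on $\sha$, which is what (LST) provides. I would fix opposite chambers $c\in\Ch(\partial\Delta)$ and $c'\in\Opp(c)$, determining a twin apartment $A_{\infty}$, and describe the apartments $A\in\sha$ with $\partial A=A_{\infty}$ as the parallel flats over $A_{\infty}$, coordinatized by coherent choices of vertices in the panel trees $\Delta(\sigma,\sigma')$ of Lemma~\ref{lem:TreeWalls} as $(\sigma,\sigma')$ ranges over the opposite panels of $A_{\infty}$. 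Condition (LST) makes $G_{\sigma,\sigma'}$ strongly transitive on each $\Delta(\sigma,\sigma')$, so the associated wall-reflections and translations move the parallel flats transitively and, combined with the vectorial reflections realized by $\Stab_{G}(A_{\infty})$ from strong transitivity at infinity, generate the full $W^{\mathrm{aff}}$-action on a chosen apartment $A$ over $A_{\infty}$. Feeding these two transitivities into the strong transitivity criterion established earlier then yields transitivity of $G$ on $\sha$ and of $\Stab_{G}(A)$ on the alcoves of $A$, i.e. strong transitivity on $\Delta$.

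The closed-orbit hypothesis on the $G^{0}_{c}$-orbits in $\Opp(c)$ enters to make the passage between parallel flats effective: through the cone topology and a North--South dynamics argument I would use it to produce the strongly regular hyperbolic automorphisms whose axes translate along the flats over $A_{\infty}$, so that the combinatorial translations furnished by (LST) are realized by honest elements of $G$ acting on $\Delta$ rather than only at infinity; when $\QD$ has factors of affine type the spherical residues degenerate and this hyperbolic input is insufficient, which is precisely where condition (AGT) of Remark~\ref{rem::AGT} is invoked. I expect the main obstacle to be the hovel-specific failure of convexity: two points need not lie in a common apartment and local chambers are filters rather than subsets, so one cannot simply retract $\Delta$ onto an apartment to transport alcoves. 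The argument is therefore routed entirely through the boundary and the panel trees, which persist even when $\Delta$ is not geodesic, and the delicate point is to check that the transitivity supplied by (LST) on the individual trees $\Delta(\sigma,\sigma')$ assembles coherently across all panel directions into a single element of $G$ realizing a prescribed element of $W^{\mathrm{aff}}$.
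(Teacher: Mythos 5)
Your proposal diverges from the paper's argument at two points where I think there are genuine gaps, not just stylistic differences. For (i) $\Rightarrow$ (ii), you take as given that $\Stab_G(A)$ is ``transitive on alcoves'' and deduce that its image in $\Aut(A)$ is all of $W^a$. But strong transitivity on a hovel is \emph{not} defined as apartment-transitivity plus chamber-transitivity in an apartment (that is the building-theoretic definition); it is Definition~\ref{4.2}, namely that the Weyl-isomorphisms occurring in (MA2) and (MA4) are realized by elements of $G$, equivalently (Proposition~\ref{lem:ST:criterion}) that local chambers and sector-germs are good. Transitivity of $\Stab_G(A)$ on the local chambers of $A$ is precisely what has to be \emph{produced}, and the paper does this by a concrete construction: thickness gives a third half-apartment $H_3$ with $H_1\cap H_3=H_2\cap H_3=h$ (Rousseau, Prop.~2.9), the (MA4)-isomorphism fixing $H_1$ and exchanging the apartments $H_1\cup H_2$ and $H_1\cup H_3$ is realized in $G$ by hypothesis, and composing two such elements yields the reflection $r_h\in W^a\cap\Stab_G(A)$. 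Your argument presupposes its conclusion at this step.

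For (ii) $\Rightarrow$ (i), your picture of ``the apartments $A$ with $\partial A=A_\infty$ as parallel flats over $A_\infty$, coordinatized by coherent choices of vertices in the panel trees'' is not correct: for a pair of opposite ideal chambers $c,c'$ there is a \emph{unique} apartment whose boundary contains both, so the set you describe is a singleton. The object that must be shown to be a single orbit is $\Opp(c)$, parametrizing the apartments containing the sector-germ $c$. Relatedly, you have the roles of the hypotheses inverted: in the paper it is condition (LST) (via reflection-translations along walls, read off from strong transitivity of $G_{\sigma,\sigma'}$ on the panel trees of Lemma~\ref{lem:TreeWalls}) that \emph{produces} the strongly regular hyperbolic element $\gamma\in G_{c,c'}$ (Theorem~\ref{thm:ExistenceStronglyReg}, with (AGT) covering the affine factors), while the closed-orbit hypothesis is consumed afterwards: the north--south dynamics of $\gamma$ (Proposition~\ref{prop::dynamics_str_reg}) makes $c'$ an accumulation point of every $G^{0}_{c}G_{c,c'}$-orbit in $\Opp(c)$, these orbits are closed by hypothesis and Lemma~\ref{lem:Levi}, hence there is only one, so $G^{0}_{c}$ is transitive on $\Opp(c)$ and Proposition~\ref{lem:ST:criterion} concludes. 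Your alternative route --- assembling the tree-by-tree transitivities from (LST) ``coherently across all panel directions into a single element of $G$'' --- is exactly the step you flag as delicate and leave unproved; it is the hard part, and the dynamical argument above is the paper's way of avoiding it. As written, the proposal does not close this gap.
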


\par The definition of a strongly transitive action of a subgroup of $\Aut(\Delta)$ on a hovel $\Delta$ is given in Rousseau~\cite[4.10]{Rou13} (see also Gaussent--Rousseau~\cite[1.5]{GR14}, or  Definition~\ref{4.2} below). Because, in principle, this definition is difficult to be verified, Proposition~\ref{lem:ST:criterion} from Section~\ref{s4} provides us with two equivalent simpler definitions, which will prove to be very useful in the sequel.

\medskip
To prove Theorem~\ref{thm::main_theorem_intr} (see Section~\ref{sec::main_theorem}), we have followed the main ideas as in Caprace--Ciobotaru~\cite{CaCi} and their results are carefully translated in the language of affine ordered hovels. Still, one has to overcome the differences between affine ordered hovels and affine buildings. Firstly, let us recall the main strategy in order to prove Theorem~\ref{thm::main_theorem_intr}. The implication (i) $\Rightarrow$ (ii) is easy. The converse implication (ii) $\Rightarrow$ (i) is more involved. 
Using Proposition~\ref{lem:ST:criterion}, we only have to prove the following: the pointwise stabilizer $G^{0}_{c} \leq G$ of any sector germ (i.e., chamber at infinity) $c$ of $\partial \Delta$, is transitive on the set of all apartments containing $c$. To do so,  more ingredients are needed: the cone topology and the existence and the dynamics of strongly regular elements.

Section~\ref{sec::cone_top_hovel} defines the cone topology only on the set of chambers at infinity of an affine ordered hovel $\Delta$ and verifies that this topology does not depend on the chosen base point, the main difficulty being that a hovel is not necessarily a geodesic metric space as in the case of \cat spaces. 

The existence and the dynamics of strongly regular elements are treated in Section~\ref{sec::str_reg_elements}. Although the proof of the dynamics of strongly regular elements at infinity of the hovel is the same as in the case of affine buildings from Caprace--Ciobotaru~\cite[Section~2.1]{CaCi}, the existence of those elements requires a different proof than in the latter case.

Finally, the dynamics of strongly regular elements and the assumption that for every $c \in \Ch(\partial \Delta)$, the $G^{0}_c$--orbits on $\Opp(c)$ are closed with respect to the cone topology on $\Ch(\partial \Delta)$, allows us to conclude that indeed $G^0_{c} $ is transitive on the set of all apartments containing $c$ and Theorem~\ref{thm::main_theorem_intr} follows.
The mysterious condition about the closedness of $G^{0}_c$--orbits on $\Opp(c)$ is imposed in order to substitute the lack of the topology induced from the hovel $\Delta$ to the group $\Aut(\Delta)$. Notice that when $\Delta$ is a locally finite affine building, the group $\Aut(\Delta)$ is locally compact and the group $G$ appearing in Theorem~\ref{thm:main-thm_recall} is a closed subgroup of $\Aut(\Delta)$. This implies that $G^{0}_c$ is a closed subgroup of $G$ and also that $G^{0}_c$--orbits on $\Opp(c)$ are closed with respect to the cone topology.

\subsection*{Acknowledgement} Part to this work was conducted when the second named author was visiting the University of Geneva. We would like to thank this institution for its hospitality and good conditions of working. \green{We also thank Bernhard  M\"{u}hlherr for his remarks on this paper.}

\section{Affine ordered hovels}\label{s1}


\subsection{Vectorial data}\label{1.1}  We consider a quadruple $(V,W^v,(\qa_i)_{i\in I}, (\qa^\vee_i)_{i\in I})$ where $V$ is a finite dimensional real vector space, $W^v$ a subgroup of $GL(V)$ (the vectorial Weyl group), $I$ a finite non empty set, $(\qa^\vee_i)_{i\in I}$ a family in $V$ and $(\qa_i)_{i\in I}$ a free family in the dual $V^*$.
 We ask these data to satisfy the conditions of Rousseau~\cite[1.1]{Rou11}.
  In particular, the formula $r_i(v)=v-\qa_i(v)\qa_i^\vee$ defines a linear involution in $V$ which is an element in $W^v$ and $(W^v,\{r_i\mid i\in I\})$ is a Coxeter system.

  \par To be more concrete, we consider the Kac--Moody case of [\lc; 1.2]: the matrix $\M=(\qa_j(\qa_i^\vee))_{i,j\in I}$ is a generalized (eventually decomposable) Cartan matrix.
  Then $W^v$ is the Weyl group of the corresponding Kac--Moody Lie algebra $\g g_\M$ and the associated real root system is
$$
\QF=\{w(\qa_i)\mid w\in W^v,i\in I\}\subset Q=\bigoplus_{i\in I}\,\Z.\qa_i.
$$ We set $\QF^\pm{}=\QF\cap Q^\pm{}$ where $Q^\pm{}=\pm{}(\bigoplus_{i\in I}\,(\Z_{\geq 0}).\qa_i)$ and $Q^\vee=(\bigoplus_{i\in I}\,\Z.\qa_i^\vee)$, $Q^\vee_\pm{}=\pm{}(\bigoplus_{i\in I}\,(\Z_{\geq 0}).\qa_i^\vee)$.
   We have  $\QF=\QF^+\cup\QF^-$ and, for $\qa=w(\qa_i)\in\QF$, $r_\qa=w.r_i.w^{-1}$ and $r_\qa(v)=v-\qa(v)\qa^\vee$, where the coroot $\qa^\vee=w(\qa_i^\vee)$ depends only on $\qa$.  We shall also consider the imaginary roots of $\g g_\M$: $\QF_{im}=\QF_{im}^+\cup\QF_{im}^-$ where $\QF_{im}^\pm\subset Q^\pm$ 
   is $W^v-$stable and $\QF\sqcup\QF_{im}$ is the set of all roots of  $\g g_\M$, associated to some Cartan subalgebra.


  \par The {\bf fundamental positive chamber} is $C^v_f=\{v\in V\mid\qa_i(v)>0,\forall i\in I\}$.
   Its closure $\overline{C^v_f}$ is the disjoint union of the vectorial faces $F^v(J)=\{v\in V\mid\qa_i(v)=0,\forall i\in J,\qa_i(v)>0,\forall i\in I\setminus J\}$ for $J\subset I$.
    The positive (resp. negative) vectorial faces are the sets $w.F^v(J)$ (resp. $-w.F^v(J)$) for $w\in W^v$ and $J\subset I$; they are chambers (resp., panels) when $J=\emptyset$ (resp. $\vert J\vert=1$).
    The support of such a face is the vector space it generates.
    The set $J$ or the face $w.F^v(J)$ or an element of this face is called {\bf spherical} if the group $W^v(J)$ generated by $\{r_i\mid i\in J\}$ is finite.  
    A chamber or a panel is spherical.

 \par The {\bf Tits cone}  $\sht$ (resp., its {\bf interior}  $\sht^\circ$) is the (disjoint) union of the positive vectorial (resp., and spherical) faces. Both are $W^v-$stable convex cones in $V$.
    
\par We make no irreducibility hypothesis for $(V,W^v)$. So $V$ (and also $\A$, $\QD$ below) may be a product of direct irreducible factors, which are either of finite, affine or indefinite type, see Kac~\cite[4.3]{K90}.

\subsection{The model apartment}\label{1.2} As in Rousseau~\cite[1.4]{Rou11} the model apartment $\A$ is $V$ considered as an affine space and endowed with a family $\shm$ of walls. 
 These walls  are affine hyperplanes directed by Ker$(\qa)$, for $\qa\in\QF$.
 They can be described as $M(\qa,k)=\{v\in V\mid\qa(v)+k=0\}$, for $\qa\in\QF$ and $k\in\QL_\qa\subset\R$ (with $\vert\QL_\qa\vert=\infty$). 
 We may (and shall) suppose the origin to be \textbf{special}, \ie $0\in\QL_\qa$, $\forall\qa\in\QF$.

 \par We say that this apartment is {\bf semi-discrete} if each $\QL_\qa$ is discrete in $\R$; then 
 $\QL_\qa=k_\qa.\Z$ is a non trivial discrete subgroup of $\R$.
Using the  Lemma 1.3 in Gaussent--Rousseau~\cite{GR14} (\ie replacing $\QF$ by another system $\QF_1$) we may assume that $\QL_\qa=\Z, \forall\qa\in\QF$.

  \par For $\qa\in\QF$, $k\in\QL_\qa$ and $M=M(\qa,k)$, the reflection $r_{\qa,k}=r_M$ with respect to the wall $M$ is the affine involution of $\A$ with fixed points this wall $M$ and associated linear involution $r_\qa$.
   The affine Weyl group $W^a=W^v\ltimes Q^\vee$ is the group generated by the reflections $r_M$ for $M\in \shm$; we assume that $W^a$ stabilizes $\shm$.

   \par An automorphism of $\A$ is an affine bijection $\qf:\A\to\A$ stabilizing the set of pairs $(M,\qa^\vee)$ of a wall $M$ and the coroot associated with $\qa\in\QF$ such that $M=M(\qa,k)$, $k\in\Z$. The group $\Aut(\A)$ of these automorphisms contains $W^a$ and normalizes it.

   \par For $\qa\in\QF$ and $k\in\R$, $D(\qa,k)=\{v\in V\mid\qa(v)+k\geq 0\}$ is an half-space, it is called an {\bf half-apartment} if $M(\qa,k)$ is a wall \ie $k\in\QL_\qa$. 


The Tits cone $\mathcal T$ 
and its interior $\mathcal T^o$
are convex and $W^v-$stable cones, therefore, we can define two $W^v-$invariant preorder relations  on $\mathbb A$: 
$$
x\leq y\;\Leftrightarrow\; y-x\in\mathcal T
; \quad x\stackrel{o}{<} y\;\Leftrightarrow\; y-x\in\mathcal T^o.
$$
 If $W^v$ has no  fixed point in $V\setminus\{0\}$ and no finite factor, then they are order relations; still, they are not in general: one may have $x\leq y,y\leq x$ and $x\neq y$. 


\subsection{Faces, sectors, chimneys...}
\label{suse:Faces}

 The faces in $\mathbb A$ are associated to the above systems of walls
and half-apartments
. As in Bruhat--Tits~\cite{BrT72}, they
are no longer subsets of $\mathbb A$, but filters of subsets of $\mathbb A$. For the definition of that notion and its properties, we refer to Bruhat--Tits~\cite{BrT72} or Gaussent--Rousseau~\cite[Definition~2.3]{GR08}. We endow $\A$ with its affine space topology.

If $F$ is a subset of $\mathbb A$ containing an element $x$ in its closure
, \textbf{the germ} of $F$ in $x$ is the filter $\mathrm{germ}_x(F)$ consisting of all subsets of $\mathbb A$ containing the intersection of $F$ and some neighborhoods of $x$. 

Given a filter $F$ of subsets of $\mathbb A$, its {\bf enclosure} $cl_{\mathbb A}(F)$ (resp., {\bf closure} $\overline F$) is the filter made of  the subsets of $\mathbb A$ containing an element of $F$ of the shape $\cap_{\alpha\in\QF\cup\QF_{im}}\,D(\alpha,k_\alpha)$, where $k_\alpha\in\QL_\qa\cup\{\infty\}$ (resp., containing the closure $\overline S$ of some $S\in F$). 
Its \textbf{support}, denoted by $supp( F )$, is the smallest affine subspace of $\A$ containing it.

\medskip

A {\bf local face} $F$ in the apartment $\mathbb A$ is associated
 to a point $x\in \mathbb A$ (its vertex) and a  vectorial face $F^v$ in $V$ (its direction); it is $F=F^\ell(x,F^v):=germ_x(x+F^v)$.
 Its closure is $\overline{F^\ell}(x,F^v)=germ_x(x+\overline{F^v})$. 
 The enclosure  $cl_{\mathbb A}(F)$ is called a closed face.

There is an order on the local faces: the assertions ``$F$ is a face of $F'$ '',
``$F'$ covers $F$ '' and ``$F\leq F'$ '' are by definition equivalent to
$F\subset\overline{F'}$.
 The dimension of a local face $F$ is the dimension of 
  its support; if $F=F^\ell(x,F^v)$, then we define $supp(F):=x+supp(F^v)$.


 A {\bf local chamber}  is a maximal local face, \ie a local face $F^\ell(x,\pm w.C^v_f)$ for $x\in\A$ and $w\in W^v$.
 The {\bf fundamental local chamber} is $C_0^+=germ_0(C^v_f)$.  A {\bf local panel}  is a local face $F^\ell(x,F^v)$, where $x\in\A$ and $F^v$ is a vectorial panel.


\medskip
 A {\bf sector} in $\mathbb A$ is a $V-$translate $Q=x+C^v$ of a vectorial chamber
$C^v=\pm w.C^v_f$ ($w \in W^v$), $x$ is its {\bf base point} and $C^v$ is its  {\bf direction}; it is open in $\A$.  
Two sectors have the same direction if, and only if, they are conjugate
by a $V-$translation, and if, and only if, their intersection contains another sector.

 The {\bf sector-germ} of a sector $Q=x+C^v$ in $\mathbb A$ is the filter $germ(Q)$ of
subsets of~$\mathbb A$ consisting of the sets containing a $V-$translation of $Q$, it is well
determined by the direction $C^v$.
So, the set of
translation classes of sectors in $\mathbb A$, the set of vectorial chambers in $V$ and
 the set of sector-germs in $\mathbb A$ are in canonical bijection.

 A {\bf sector-face} in $\mathbb A$ is a $V-$translation $\mathfrak f=x+F^v$ of a vectorial face
$F^v=\pm w.F^v(J)$. The sector-face-germ of $\mathfrak f$ is the filter $\mathfrak F$ of
subsets containing a shortening of $\mathfrak f$ \ie a translation $\mathfrak f'$ of $\mathfrak f$ by a vector in $F^v$ ({\it i.e.,} $\mathfrak
f'\subset \mathfrak f$). If $F^v$ is spherical, then $\mathfrak f$ and $\mathfrak F$ are also called
spherical. The sign of $\mathfrak f$ and $\mathfrak F$ is the sign of $F^v$.
We say that $\mathfrak f$ (resp. $\mathfrak F$) is a {\bf sector-panel} (resp., {\bf sector-panel-germ}) if, and only if, $F^v$ is a vectorial panel.

\medskip
A {\bf chimney} in $\mathbb A$ is associated to a local face $F=F^\ell(x, F_0^v)$, called its basis, and to a vectorial face $F^v$, its direction; it is the filter
$$
\mathfrak r(F,F^v) = cl_{\mathbb A}(F+F^v).
$$ A chimney $\mathfrak r = \mathfrak r(F,F^v)$ is {\bf splayed} if $F^v$ is spherical, it is {\bf solid} if its support (as a filter) has a finite  pointwise stabilizer in $W^v$. 
A splayed chimney is therefore solid. The enclosure of a sector-face $\mathfrak f=x+F^v$ is a chimney.
The germ of the chimney $\g r$ is the filter $\mathrm{germ}(\mathfrak r )=\mathfrak R$ consisting of all subsets of $\mathbb A$ which contain $ \mathfrak r(F+\qx,F^v)$  for some $\qx\in F^v$.

 \par A  ray $\delta$ with origin in $x$ and containing $y\not=x$ (or the interval $(x,y]$, or the segment $[x,y]$) is called {\bf preordered} if $x\leq y$ or $y\leq x$ and {\bf generic} if $x\stackrel{o}{<} y$ or $y\stackrel{o}{<} x$; its sign is $+$ if $x\leq y$ and $-$ if $x\geq y$.
 The germ of this ray is the filter $\mathrm{germ}(\qd)$ consisting of all subsets of $\mathbb A$ which contain a shortening of $\qd$ \ie $\qd\setminus[x,z)$  for some $z\in\qd$.

 \subsection{The hovel}\label{1.3}

 In this section, we recall the definition and some properties of an affine ordered hovel given  in Rousseau~\cite{Rou11}.

\medskip
\parni{\bf 1)} An apartment of type $\mathbb A$ is a set $A$ endowed with a set $Isom^w\!(\mathbb A,A)$ of bijections (called \textbf{Weyl-isomorphisms}) such that, if $f_0\in Isom^w\!(\mathbb A,A)$, then $f\in Isom^w\!(\mathbb A,A)$ if, and only if, there exists $w\in W^a$ satisfying $f = f_0\circ w$.
An \textbf{isomorphism} (resp., a \textbf{Weyl-isomorphism}, resp., a \textbf{vectorially Weyl-isomorphism}) between two apartments $\varphi :A\to A'$ is a bijection such that, for any $f\in Isom^w\!(\mathbb A,A)$, $f'\in Isom^w\!(\mathbb A,A')$, $f'^{-1}\circ\qf\circ f\in \Aut(\A)$ (resp., $\in W^a$, resp., $\in(W^v\ltimes V)\cap \Aut(\A)$).
We write $Isom(A,A')$ (resp., $Isom^w(A,A')$, resp., $Isom^w_\R(A,A'))$ for the set of these isomorphisms.
 As the filters in $\A$ defined in Section~\ref{suse:Faces} above (\eg local faces, sectors, walls,...) are permuted by $\Aut(\A)$, they are well defined in any apartment of type $\A$ and exchanged by any isomorphism.

\begin{definition}
\label{de:AffineHovel}
An \textbf{affine ordered hovel of type $\mathbb{A}$} is a set $\QD$ endowed with a covering $\mathcal{A}$ of subsets  called apartments such that:
\begin{enumerate}
\item[{\bf (MA1)}] any $A\in \mathcal A$ admits a structure of an apartment of type $\mathbb A$;

\item[{\bf (MA2)}] if $F$ is a point, a germ of a preordered interval, a generic  ray or a solid chimney in an apartment $A$ and if $A'$ is another apartment containing $F$, then $A\cap A'$ contains the
enclosure $cl_A(F)$ of $F$ and there exists a Weyl-isomorphism from $A$ onto $A'$ fixing (pointwise) $cl_A(F)$;

\item[{\bf (MA3)}] if $\mathfrak R$ is  the germ of a splayed chimney and if $F$ is a closed face or a germ of a solid chimney, then there exists an apartment that contains $\mathfrak R$ and $F$;

\item[{\bf (MA4)}] if two apartments $A,A'$ contain $\mathfrak R$ and $F$ as in {\bf (MA3)}, then their intersection contains $cl_A(\mathfrak R\cup F)$ and there exists a Weyl-isomorphism from $A$ onto $A'$ fixing (pointwise) $cl_A(\mathfrak R\cup F)$;

\item[{\bf (MAO)}] if $x,y$ are two points contained in two apartments $A$ and $A'$, and if $x\leq_A y$ then the two line segments $[x,y]_A$ and $[x,y]_{A'}$ are equal.
\end{enumerate}
\end{definition}

\par\blue{Remark that we assume no completeness property for the apartment system $\sha$.}

  \par We say that $\QD$ is \textbf{thick} (resp., \textbf{thick of finite thickness}) if the number of local chambers  containing a given (local) panel is $\geq 3$ (resp., and finite).

  \par An \textbf{automorphism} (resp., a \textbf{Weyl-automorphism}, resp., \textbf{a vectorially Weyl-automorphism}) of $\QD$ is a bijection $\qf:\QD\to\QD$ such that $A\in\sha\iff \qf(A)\in\sha$ and then $\qf\vert_A:A\to\qf(A)$ is an isomorphism (resp., a Weyl-isomorphism, resp., a vectorially Weyl-isomorphism).

\medskip
\parni{\bf 2) The building at infinity}: By (MA3), two spherical sector-faces (or generic rays) are, up to shortening, contained in a same apartment $A$; we say they are \textbf{parallel} if one of it is a translation of the other one. This does not depend on the choice of $A$ by (MA4). The parallelism is an equivalence relation. The parallel class $\partial\qd$ of a generic ray $\qd$ is called an \textbf{ideal point or a point at infinity}. The parallel class $\partial\mathfrak f$ of a spherical sector-face $\mathfrak f$ is called \textbf{an ideal face or a face at infinity} (a \textbf{chamber} if $\mathfrak f$ is a sector and a \textbf{panel} if $\mathfrak f$ is a sector-panel). Actually, a chamber at infinity is nothing else than a sector-germ.

Notice that, by the hypotheses made with respect to the definition of an affine ordered hovel $\Delta$, the rank of its building at infinity $\partial \Delta$ is strictly positive.

\par We write $\partial\g f\leq \partial\g f'$ if, for good choices of $\g f,\g f'$ in their parallel classes, we have $\g f\subset\overline{\g f'}$.
The ordered set of ideal faces of sign $\pm$ is (the set of spherical faces of) a building $\partial^\pm\QD$ with Weyl group $W^v$; these buildings are twinned (see Rousseau~\cite[3.7]{Rou11}).  We write $\partial\QD=\partial^+\QD\sqcup\partial^-\QD$ and $Ch(\partial\QD)=\{\mathrm{ideal\ chambers}\}$.
 
 \par We say that an ideal point $\qx$, an ideal face $\phi$, is at infinity of an apartment $A$ (or a wall $h$, an half apartment $H$, ...) if we may write $\qx=\partial\qd$, $\phi=\partial\g f$, with $\qd\subset A$, $\g f\subset A$ (or $\subset h$, $\subset H$, ...).
 We say that $\qx=\partial\qd\in\phi=\partial\g f$ if $\qd\subset\g f$.
 
 \par From (MA3) and (MA4), we see that a point $x\in\QD$ and an ideal chamber $c$ (resp., an ideal point $\qx$) determine, in the parallel class $c$ (resp., $\qx$), a unique sector (resp., generic ray $[x,\qx)$) of base point (resp., origin) $x$. To fix the notation, for a point $x \in \QD$ and an ideal chamber $c \in \partial \Delta$, we denote by $Q_{x,c}$ the sector in $\Delta$ with base point $x$ that corresponds to the chamber at infinity $c$.
 
\par Any vectorially Weyl-automorphism $\qf$ of $\QD$ acts on $\partial\QD$ as a type preserving automorphism; if it stabilizes an ideal face $\phi$, it fixes any ideal point $\qx\in\phi$.

\medskip
\parni{\bf 3)} For $x,y\in\QD$, we introduce the relation:

\par $x\leq y$ if, and only if, there is an (or for any) apartment $A$ such that $x,y\in A$ and $x\leq_Ay$ (\ie $f^{-1}(x)\leq f^{-1}(y)$ for any $f\in Isom^w(\A,A)$).

\par This relation is a preorder on $\QD$, invariant by any vectorially Weyl-automorphism.

\medskip
\parni{\bf 4)} Let $c$ be an ideal chamber at infinity of an apartment $A$. By (MA3) and (MA4), for any $x\in\QD$, there is an apartment $A'$ containing $x$ and the sector-germ $germ(Q)$ associated to $c$ and there is a unique Weyl-isomorphism $\qf:A\to A'$ fixing $germ(Q)$.
 So, by the usual arguments, we see that $x\mapsto\qf(x)$ is a well defined map $\qr_{A,c}:\QD\to A$, called the {\bf retraction of $\QD$ onto $A$ with center $c$}.

\medskip
\parni{\bf 5)} The main examples of thick, affine, ordered, semi-discrete (resp., and of finite thickness) hovels are provided by the hovels of almost split Kac--Moody groups over fields complete for a discrete valuation and with a perfect (resp., finite) residue field, see \cite{GR08}, \cite{Rou12}, \cite{Cha} and \cite{Rou13}.

\subsection{Trees in affine ordered hovels}\label{1.5}

The following property of affine ordered hovels is useful in the sequel. 

\begin{lemma}
\label{lem:TreeWalls}
Let $\Delta$ be an  affine ordered hovel of dimension $n$. 
Let $\sigma, \sigma' \subset \partial \Delta$ be a pair of opposite panels at infinity. We denote by $P(\sigma, \sigma')$ the union of all apartments of $\Delta$ whose boundaries contain $\sigma$ and $\sigma'$. Then  $P(\sigma, \sigma')$ is a closed convex subset of $\Delta$, which splits canonically as a product
$$ P(\sigma, \sigma') \cong T \times \R^{n-1},$$
where $T$ is a  $\R-$tree whose ends are canonically in one-to-one correspondence with the elements of the set $\Ch(\sigma)$ of all ideal chambers having $\sigma$ as a panel. Under this isomorphism, the walls of $\Delta$, contained in $ P(\sigma, \sigma')$ and containing $\qs,\qs'$ at infinity, correspond to the subsets of the form $\{v\} \times \R^{n-1}$ with $v$ a vertex of $T$.

\par When $\QD$ is semi-discrete (resp., thick, resp., thick of finite thickness), then the $\R-$tree $T$ is a genuine discrete (resp., thick, resp., thick of finite thickness) tree.
\end{lemma}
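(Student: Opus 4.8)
The plan is to reduce everything to the combinatorics of the walls directed by the common support of $\sigma$ and $\sigma'$ and to extract the splitting directly from the apartment axioms, since (as stressed in the introduction) a hovel is not a geodesic space and we cannot invoke the \cat theory of parallel sets. First I fix a base apartment $A_0\in P(\sigma,\sigma')$; one exists because $\sigma,\sigma'$ are opposite, hence lie in a common twin apartment of $\partial\Delta$, which is the boundary of some $A_0\in\sha$. Identifying $A_0$ with $\A=V$ by a Weyl-isomorphism carrying $\sigma,\sigma'$ to $\partial(\pm F^v)$ for a vectorial panel $F^v$, let $\qa\in\QF$ be the root (unique up to sign) such that the walls of $A_0$ having $\sigma$, equivalently $\sigma'$, at infinity are exactly the affine hyperplanes $M(\qa,k)$, $k\in\QL_\qa$; these are directed by $H:=\Ker\qa$. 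Writing $V=\R u\oplus H$ with $\qa(u)=1$ and $H\cong\R^{n-1}$, the same description holds verbatim in every $A\in P(\sigma,\sigma')$ once it is identified with $\A$ compatibly with $\sigma,\sigma'$, and these are precisely the walls of $\Delta$ lying in $P(\sigma,\sigma')$ with $\sigma,\sigma'$ at infinity.

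The technical heart is a pairwise-intersection statement: for $A,A'\in P(\sigma,\sigma')$ the set $A\cap A'$ is \emph{$H$-saturated}, i.e.\ a union of full slices $\{\qa=t\}$ with $t$ ranging over an interval of $\R$, and there is a Weyl-isomorphism $A\to A'$ fixing it pointwise. I would prove this from (MA3)--(MA4): since panels are spherical, the sector-panel germ in direction $\sigma$ at a chosen transverse level is a \emph{splayed} (hence solid) chimney germ $\mathfrak R_\sigma$, and likewise for $\sigma'$; both apartments carry representatives at every level because the whole family $M(\qa,k)$ is present. Applying (MA4) to $\mathfrak R_\sigma$ together with the solid chimney germ $\mathfrak R_{\sigma'}$ forces $A\cap A'\supseteq cl_A(\mathfrak R_\sigma\cup\mathfrak R_{\sigma'})$, which is the $H$-saturated slab between the two chosen levels; varying the levels over which the two apartments agree yields the saturation and the interval shape. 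This is the step I expect to be the main obstacle, as one must check that the enclosures of the chosen ideal data are exactly the $H$-slices and that the fixing Weyl-isomorphisms are compatible as $A'$ varies; it is the substitute for the \cat parallel-set argument.

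Granting the intersection statement, the splitting is assembled as follows. The direction $H$ is canonically parallel across all apartments of $P(\sigma,\sigma')$, because two $H$-slices in a shared apartment are translates and the fixing Weyl-isomorphisms from (MA4) act trivially on the $H$-directions; this identifies the leaves of the $H$-foliation and produces the factor $\R^{n-1}\cong H$ together with a projection $p_H\colon P(\sigma,\sigma')\to\R^{n-1}$. I then define $T$ to be the transverse leaf space, i.e.\ the set of $H$-saturated wall classes in $P(\sigma,\sigma')$, metrized by the transverse $\qa$-distance measured inside any apartment; the $H$-saturation of intersections makes this metric well defined and apartment-independent. The map $(p_T,p_H)\colon P(\sigma,\sigma')\to T\times\R^{n-1}$ is then a bijection: injectivity and the coordinates are pinned down by (MAO) (segments determine the transverse position), and surjectivity follows from gluing apartments along their $H$-saturated intersections. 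Closedness and convexity of $P(\sigma,\sigma')$ come from the same slab picture, each apartment mapping onto a line $\times\,\R^{n-1}$ and the intersections being subslabs.

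It remains to identify the transverse factor as an $\R$-tree with the stated ends and vertices. I would show $T$ is $0$-hyperbolic, i.e.\ uniquely geodesic with no embedded circuit, directly from the pairwise-intersection statement: any two points of $T$ lie on a common transverse line coming from a shared apartment, and two geodesic segments issuing from a common point must coincide on an initial segment because their carrying apartments meet in an $H$-saturated convex subslab. The ends of $T$ correspond bijectively to $\Ch(\sigma)$: an ideal chamber $c$ with panel $\sigma$ refines each $H$-wall to a sector direction and so determines a transverse ray, hence an end of $T$, and conversely every end arises this way. Under the splitting, the walls $M(\qa,k)$ of $\Delta$ lying in $P(\sigma,\sigma')$ are exactly the slices $\{v\}\times\R^{n-1}$ with $v$ the corresponding vertex. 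Finally the refinements are immediate: if $\Delta$ is semi-discrete then $\QL_\qa=k_\qa\Z$ is discrete, so the vertices are transversally discrete and $T$ is a simplicial tree; thickness means each panel carries $\geq 3$ local chambers, giving $\geq 3$ branches at each vertex, so $T$ is thick; and finite thickness yields local finiteness of $T$.
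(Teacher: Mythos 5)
The paper does not actually prove this lemma: its ``proof'' is the single sentence ``A reference for this is Rousseau~[Rou11, Section 4.6]'', so any genuine argument you give is by necessity a different route. Your overall plan (identify the transverse root direction $\qa$, show apartments of $P(\sigma,\sigma')$ glue along $\Ker\qa$-saturated slabs, take the leaf space as the tree) is the natural adaptation of the \cat parallel-set picture and is the right shape of argument.

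However, the technical heart of your sketch --- the pairwise-intersection statement --- is established by an application of (MA4) that fails. You apply (MA4) to the chimney germs $\mathfrak R_\sigma$, $\mathfrak R_{\sigma'}$ for an \emph{arbitrary} pair $A,A'\in P(\sigma,\sigma')$, asserting that ``both apartments carry representatives at every level because the whole family $M(\qa,k)$ is present.'' This conflates walls at the same level $k$ in the two separate identifications of $A$ and $A'$ with $\A$ with a single common wall of $\Delta$: a chimney germ in the class of $\sigma$ is \emph{not} determined by the ideal panel $\sigma$ alone but also by its transverse position (its vertex in $T$), so $A$ and $A'$ generally contain no common representative of $\mathfrak R_\sigma$, and (MA4) simply does not apply to that pair. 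Indeed, in the very product $T\times\R^{n-1}$ you are trying to build, two apartments correspond to two lines of $T$, which in a thick tree can be disjoint; then $A\cap A'=\emptyset$ while $cl_A(\mathfrak R_\sigma\cup\mathfrak R_{\sigma'})$ is nonempty, so the claimed inclusion $A\cap A'\supseteq cl_A(\mathfrak R_\sigma\cup\mathfrak R_{\sigma'})$ is false. The correct argument (as in Rousseau's treatment) must first use (MA3) for splayed chimney germs to show that suitable pairs of points or germs of $P(\sigma,\sigma')$ lie in a common apartment of $P(\sigma,\sigma')$, and only then apply (MA4) to a common chimney germ extracted from an actual common point of $A\cap A'$; the saturation and the tree structure come out of that more careful bookkeeping. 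The downstream steps (well-definedness of the transverse metric, surjectivity of $(p_T,p_H)$, $0$-hyperbolicity) all lean on this flawed step, so as written the proposal has a genuine gap.
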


\begin{proof}
A reference for this is Rousseau~\cite[Section 4.6]{Rou11}.
\end{proof}

\subsection{Stabilizers of pairs of opposite panels}\label{1.6}

\par Recall that a group of automorphisms of a twin building is said to be strongly transitive if, and only if, it acts transitively on the pairs $(c,A)$, where $A$ is an apartment of the twin building and $c$ is a chamber of $A$.

\medskip
The following result is well-known; still, we provide a proof for completeness. 

\begin{lemma}
\label{lem:OppPanels}
Let $Z$ be a thick twin building and let $G \leq \Aut(Z)$ be a strongly transitive group of type-preserving automorphisms. Then, for any pair of opposite panels $\sigma, \sigma'$, the stabilizer $G_{\sigma, \sigma'}$ is $2$-transitive on the set of chambers $\Ch(\sigma)$. 
\end{lemma}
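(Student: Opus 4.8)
The plan is to deduce $2$-transitivity directly from strong transitivity by exploiting the combinatorics of opposite panels, rather than via the usual two-step ``transitive plus point-stabiliser transitive'' argument. First I would record the combinatorial dictionary for the opposite panels $\sigma,\sigma'$, say of types $s,s'$. For opposite panels the projections $\proj_{\sigma'}\colon\Ch(\sigma)\to\Ch(\sigma')$ and $\proj_{\sigma}\colon\Ch(\sigma')\to\Ch(\sigma)$ are mutually inverse bijections, and a chamber $c\in\Ch(\sigma)$ is opposite to $c'\in\Ch(\sigma')$ if and only if $c'\neq\proj_{\sigma'}(c)$ (equivalently $\proj_{\sigma}(c')\neq c$). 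I would also use the standard facts that a pair $x,y$ of mutually opposite chambers lies in a unique twin apartment $\Sigma(x,y)$, that $\Sigma(x,y)$ contains $\proj_P(y)$ for every panel $P$ of $x$, and that a type-preserving $g$ carries the type-$s$ panel of a chamber to the type-$s$ panel of its image.

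Next I would reformulate the hypothesis. Strong transitivity, in the form recalled in Section~\ref{1.6}, means that $G$ is transitive on the pairs $(c,A)$ with $A$ a twin apartment and $c$ a chamber of $A$. Since a chamber $c$ of $A=(A_+,A_-)$ has a unique opposite in $A$, and since an opposite pair determines a unique twin apartment, this is equivalent to the statement that $G$ is transitive on ordered pairs $(x,y)$ of mutually opposite chambers of $Z$.

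The core of the argument is then a single application of this transitivity. Let $(d_1,d_2)$ and $(e_1,e_2)$ be two ordered pairs of distinct chambers of $\Ch(\sigma)$; I must produce $g\in G_{\sigma,\sigma'}$ with $gd_i=e_i$. Put $d_2'=\proj_{\sigma'}(d_2)$ and $e_2'=\proj_{\sigma'}(e_2)$. Because $d_1\neq d_2$ and the projection is a bijection, $\proj_{\sigma'}(d_1)\neq d_2'$, so $d_1$ is opposite $d_2'$; likewise $e_1$ is opposite $e_2'$. By the reformulated hypothesis there is $g\in G$ with $gd_1=e_1$ and $gd_2'=e_2'$, carrying $\Sigma(d_1,d_2')$ onto $\Sigma(e_1,e_2')$. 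Type-preservation gives $g\sigma=\sigma$ (the type-$s$ panel of $d_1$ and of $e_1$) and $g\sigma'=\sigma'$ (the type-$s'$ panel of $d_2'$ and of $e_2'$), so $g\in G_{\sigma,\sigma'}$. Finally $d_2=\proj_{\sigma}(d_2')\in\Sigma(d_1,d_2')$, so the two chambers of $\sigma$ in this apartment are exactly $d_1,d_2$; similarly $e_1,e_2$ are the two $\sigma$-chambers of $\Sigma(e_1,e_2')$. As $g$ stabilises $\sigma$ and maps the first apartment to the second, it sends $\{d_1,d_2\}$ onto $\{e_1,e_2\}$, and since $gd_1=e_1$ we conclude $gd_2=e_2$.

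The step I expect to be the main obstacle is the verification that $\Sigma(d_1,d_2')$ genuinely contains the second chamber $d_2$ — that is, that its two $\sigma$-chambers are precisely $d_1$ and $d_2$ — which rests on the opposition-versus-projection dichotomy for opposite panels together with the fact that projections preserve twin apartments. Assembling these standard but slightly delicate twin-building facts (in particular the bijectivity of the projection between opposite panels) is the only real content; once they are in place, the transitivity hypothesis finishes the proof in one stroke.
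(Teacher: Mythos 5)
Your argument is correct, and it rests on the same two pillars as the paper's own proof: the opposition dichotomy for chambers in opposite panels (exactly one chamber of $\sigma'$ fails to be opposite a given $c\in\Ch(\sigma)$, namely $\proj_{\sigma'}(c)$) and the observation that a type-preserving element matching up chambers of $\sigma$ with chambers of $\sigma'$ must stabilize both panels. The organization, however, is genuinely different. The paper fixes one chamber $c\in\Ch(\sigma)$, sets $c'=\proj_{\sigma'}(c)$, and uses the consequence of strong transitivity that $G_{c'}$ acts transitively on the chambers opposite $c'$; the resulting element automatically fixes $c=\proj_\sigma(c')$ while moving $x$ to $y$, so one obtains transitivity of every point stabilizer on the complement, and $2$-transitivity follows by the standard composition argument (which implicitly uses $|\Ch(\sigma)|\ge 3$, i.e.\ thickness). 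You instead treat an ordered pair in one stroke: you project only the second coordinate, apply transitivity on ordered pairs of opposite chambers to send $(d_1,d_2')$ to $(e_1,e_2')$, and then recover $gd_2=e_2$ from the convexity and uniqueness of the twin apartment $\Sigma(d_1,d_2')$. Your route avoids the thickness-based composition step but pays for it with extra twin-apartment bookkeeping; note that this last step can be streamlined by using that automorphisms commute with projections, so that $gd_2=g\,\proj_\sigma(d_2')=\proj_{g\sigma}(gd_2')=\proj_\sigma(e_2')=e_2$, with no apartment argument needed. Both proofs are complete and rely only on standard twin-building facts.
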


\begin{proof}
Let $c \in \Ch(\sigma)$ and $x, y \in \Ch(\sigma)$ be two chambers different from $c$. Let $c' = \proj_{\sigma'}(c)$. Then $x$ and $y$ are both opposite $c'$. Therefore there is $g \in G_{c'}$ mapping $x$ to $y$. Since $G$ is type-preserving, it follows that $g$ fixes $\sigma'$ (because it fixes $c'$) and hence $\sigma$ (because it is the unique panel of $x$, respectively $y$, which is opposite $\sigma'$). Thus $g \in G_{\sigma, \sigma'}$. Moreover $g$ fixes $c'$, and hence also $c = \proj_\sigma(c')$. 

Thus, for any triple $c, x, y$ of distinct chambers in $\Ch(\sigma)$, we have found an element $g \in G_{\sigma, \sigma'}$ fixing $c$ and mapping $x$ to $y$. The $2$-transitivity of $G_{\sigma, \sigma'}$ on $\Ch(\sigma)$ follows. 
\end{proof}

\blue{\subsection{Locally \green{complete} apartment systems}
\label{1.7}

\par There is no existing definition of complete apartment systems for hovels. We introduce now an, a priori, weaker definition:

\begin{definition}\label{8.1} {Let $\QD$ be an affine ordered hovel.} The apartment system $\sha$ of $\QD$ is said to be \textbf{locally-complete} if the following holds:

\begin{enumerate} Take any increasing sequence $\{H_n\}_{n\geq 0}$ of half-apartments that are respectively contained in apartments $\{A_n\}_{n \geq 0}\in\sha$.
We suppose that, for an ideal chamber $c\in \Ch(\partial H_0)$, we have $\bigcup_{n\geq0}\,\qr_{A_0,c}(H_n)=A_0$.
Then $\bigcup_{n\geq0}\,H_n$ is an apartment $A\in\sha$.
\end{enumerate}

\end{definition}
\label{def::local_comp}

\par Clearly, for trees, the local-completeness is equivalent to the completeness of the apartment system. So, combining Lemmas \ref{lem:TreeWalls}, \ref{lem:OppPanels} and Corollary 3.6 in \cite{CaCi} (which assumes the apartment system complete), we get the following:

\begin{lemma}\label{lem:loc-comp} Let $(\QD,\sha)$ be an affine ordered hovel with a locally complete apartment system. Then any vectorially Weyl subgroup of $Aut(\Delta)$ that acts strongly transitively on $\partial\Delta$ satisfies condition (LST).
\end{lemma}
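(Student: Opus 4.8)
The plan is to fix an arbitrary pair of opposite panels $\sigma,\sigma'\subset\partial\Delta$ and to verify that the stabilizer $G_{\sigma,\sigma'}$ acts strongly transitively on the associated panel tree $\Delta(\sigma,\sigma')=(T(\sigma,\sigma'),\sha(\sigma,\sigma'))$; since the pair is arbitrary, this is exactly condition (LST) in the sense of Definition~\ref{def::LST}. First I would set up the action. By Lemma~\ref{lem:TreeWalls} there is a canonical splitting $P(\sigma,\sigma')\cong T(\sigma,\sigma')\times\R^{n-1}$ in which the ends of $T(\sigma,\sigma')$ are in bijection with $\Ch(\sigma)$ and the walls of $\Delta$ lying in $P(\sigma,\sigma')$ with $\sigma,\sigma'$ at infinity correspond to the slices $\{v\}\times\R^{n-1}$. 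Since $G_{\sigma,\sigma'}$ stabilizes $\sigma$ and $\sigma'$, it preserves $P(\sigma,\sigma')$; as it acts by vectorially Weyl automorphisms it respects walls, apartments and parallelism, hence the canonical product decomposition, and therefore induces an action by automorphisms on $\Delta(\sigma,\sigma')$ preserving the apartment system $\sha(\sigma,\sigma')$. Under the bijection above, the action of $G_{\sigma,\sigma'}$ on $\Ch(\sigma)$ is identified with its action on the set of ends of $T(\sigma,\sigma')$.

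Next I would produce two-transitivity. The twin building $\partial\Delta$ is thick and $G$ acts on it strongly transitively by type-preserving automorphisms, so Lemma~\ref{lem:OppPanels} applies and gives that $G_{\sigma,\sigma'}$ is $2$-transitive on $\Ch(\sigma)$. By the identification just made, $G_{\sigma,\sigma'}$ is $2$-transitive on the ends of $T(\sigma,\sigma')$.

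It then remains to see that $\sha(\sigma,\sigma')$ is a \emph{complete} apartment system on the tree, for then Corollary~3.6 of \cite{CaCi} — applied to the thick tree $\Delta(\sigma,\sigma')$ with its complete apartment system and to the group $G_{\sigma,\sigma'}$, which is $2$-transitive on its ends — yields that $G_{\sigma,\sigma'}$ acts strongly transitively on $\Delta(\sigma,\sigma')$, completing the argument. To obtain completeness I would show that local-completeness of $\sha$ is inherited by $\sha(\sigma,\sigma')$: an increasing sequence of half-apartments $\{h_m\}$ of $T(\sigma,\sigma')$, each contained in an apartment of $\sha(\sigma,\sigma')$ and satisfying the retraction–exhaustion condition of Definition~\ref{8.1}, lifts through the product structure to an increasing sequence of half-apartments $H_m=h_m\times\R^{n-1}$ of $\Delta$, contained in apartments of $\sha$ and satisfying the corresponding condition. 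Local-completeness of $\sha$ then gives that $\bigcup_m H_m\in\sha$, whose tree part $\bigcup_m h_m$ is an apartment (bi-infinite geodesic) of $T(\sigma,\sigma')$. Thus $\sha(\sigma,\sigma')$ is locally complete, and since for a tree local-completeness coincides with completeness, it is complete.

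The main obstacle is precisely this last step: one must check that the canonical product decomposition of Lemma~\ref{lem:TreeWalls} intertwines all the data appearing in Definition~\ref{8.1} — half-apartments, their containing apartments, ideal chambers of their boundaries, and the retractions $\qr_{A,c}$ — between $\Delta(\sigma,\sigma')$ and $\Delta$, so that the hypotheses of local-completeness genuinely transfer from the tree to $\Delta$ and back. Everything else, namely the reduction via Lemma~\ref{lem:OppPanels} and the final invocation of \cite{CaCi}, is then formal.
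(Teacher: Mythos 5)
Your argument is correct and is essentially the paper's own proof: combine Lemma~\ref{lem:TreeWalls}, the $2$-transitivity of $G_{\sigma,\sigma'}$ on $\Ch(\sigma)$ from Lemma~\ref{lem:OppPanels}, and Corollary~3.6 of \cite{CaCi}, after observing that local-completeness of $\sha$ makes the apartment system $\sha(\sigma,\sigma')$ of the panel tree complete. The paper compresses the transfer step into the single remark that, for trees, local-completeness coincides with completeness; your explicit lifting of half-apartments through the product decomposition $P(\sigma,\sigma')\cong T(\sigma,\sigma')\times\R^{n-1}$ is exactly the intended justification of that step.
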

}

\section{Stabilizers of chambers at infinity}\label{s2}

This section reproduces the same results as in Caprace--Ciobotaru~\cite[Section 3]{CaCi}, where affine Euclidean buildings are studied. We only translate those results in the language of affine ordered hovels.

\begin{lemma}
\label{lem:unip}
Let  $\Delta$ be an affine ordered hovel and let $G \leq \Aut(\Delta)$ be any group of vectorially Weyl-automorphisms. Let $c \in \Ch(\partial \Delta)$ be a chamber at infinity. Then the set 
$$
G_c^0 :=\{g \in G_c \; | \;  g \text{ fixes some point of }\Delta\}
$$
is a normal subgroup of the stabilizer $G_c :=\{g \in G \; | \;  g(c)=c\}$. 
\end{lemma}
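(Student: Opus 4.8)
The plan is to realize $G_c^0$ as the kernel of a \emph{translation homomorphism} $\nu\colon G_c\to(V,+)$. This is the natural route because the subgroup property is not visible directly: if $g,h\in G_c$ fix points $p,q$ respectively, there is no reason for $gh$ to fix any point. Once $\nu$ is constructed and shown to be a homomorphism with $\ker\nu=G_c^0$, both the subgroup property and normality follow for free, since kernels of homomorphisms are normal subgroups.

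To construct $\nu$, I would fix once and for all an apartment $A_0$ with $c$ at infinity, together with an identification $A_0\cong V$ coming from some $f_0\in Isom^w(\mathbb A,A_0)$, so that $c$ corresponds to a fixed vectorial chamber $C^v$. For $g\in G_c$ the apartment $g(A_0)$ again has $c$ at infinity, so the retraction $\rho:=\rho_{A_0,c}$ restricts on $g(A_0)$ to the unique germ-$c$-fixing Weyl-isomorphism $g(A_0)\to A_0$. Hence $\psi_g:=\rho\circ g|_{A_0}$ is a vectorially Weyl-automorphism of $A_0$ fixing the sector-germ $c$; its linear part lies in $W^v$ and fixes $C^v$, so it is trivial and $\psi_g$ is a \emph{translation}, by a vector I call $\nu(g)\in V$. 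The crucial step is to upgrade this into the master identity $\rho(gx)=\rho(x)+\nu(g)$ for \emph{all} $x\in\Delta$. For this I would invoke the two standard properties of retractions centred at an ideal chamber (valid in the hovel by Rousseau~\cite{Rou11} and the axioms (MA3),(MA4)): the equivariance $\rho_{gA_0,c}\circ g=g\circ\rho$ (using $gc=c$) and the composition law $\rho\circ\rho_{gA_0,c}=\rho$. Combining them gives $\rho\circ g=\rho\circ\rho_{gA_0,c}\circ g=\rho\circ(g\circ\rho)=(\rho\circ g)\circ\rho=\psi_g\circ\rho$, and evaluating on $x$ yields the identity. The homomorphism property $\nu(gh)=\nu(g)+\nu(h)$ is then immediate by applying the identity to $gh$ and to $g$ then $h$.

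It remains to identify the kernel. If $g\in G_c$ fixes a point $p$, the master identity at $x=p$ reads $\rho(p)=\rho(p)+\nu(g)$, so $\nu(g)=0$; thus $G_c^0\subseteq\ker\nu$. For the reverse inclusion, suppose $\nu(g)=0$, i.e.\ $\rho(gx)=\rho(x)$ for all $x$. By (MA3) the apartments $A_0$ and $g(A_0)$, both having $c$ at infinity, share (the enclosure of) a sub-sector-germ of direction $c$; let $\phi\colon A_0\to g(A_0)$ be the germ-$c$-fixing Weyl-isomorphism, so that $\rho|_{g(A_0)}=\phi^{-1}$. For $x$ in the common region the equality $\rho(gx)=\rho(x)=x$ gives $\phi^{-1}(gx)=x$, i.e.\ $gx=\phi(x)$; but (MA4) forces $\phi$ to fix that common sub-sector-germ pointwise, so $gx=x$ there. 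These are genuine points of $\Delta$, whence $g\in G_c^0$ and $\ker\nu=G_c^0$.

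The main obstacle is twofold. First, one must establish the two retraction properties (equivariance and composition with a common centre) entirely within the hovel framework, where the absence of a geodesic $\CAT(0)$ structure rules out the usual metric arguments; these have to be traced back to the Weyl-isomorphism gluing supplied by (MA2)--(MA4) and to Rousseau's construction of $\rho_{A_0,c}$ in Section~\ref{1.3}. Second, the implication $\nu(g)=0\Rightarrow g$ fixes a point is the delicate direction: it hinges on the fact that two apartments sharing the ideal chamber $c$ genuinely overlap along an honest sub-sector (splayed chimney germ) on which the connecting Weyl-isomorphism is the identity. A minor bookkeeping point, which I would address in a remark, is that $\nu$ depends on the chosen identification $A_0\cong V$ only up to an automorphism of $(V,+)$ induced by $W^v$, so that $\ker\nu=G_c^0$ is independent of all choices.
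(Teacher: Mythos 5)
Your proof is correct, but it takes a genuinely different and heavier route than the paper's argument for this particular lemma. The paper's proof is essentially two lines: since $g\in G_c$ is vectorially Weyl and fixes the ideal chamber $c$, if it fixes a point $x$ it must fix pointwise the whole sector $Q_{x,c}$; closure of $G_c^0$ under products, inverses and conjugation by $G_c$ then follows because any two sectors of direction $c$ contain a common subsector and $G_c$ permutes the sectors of direction $c$. What you do instead is construct the translation homomorphism $\nu\colon G_c\to (V,+)$ and identify $G_c^0$ with its kernel --- but this is precisely the content of the paper's next statement, Lemma~\ref{lem:BusemanRetraction} (the homomorphism $\beta_c$ with $\beta_c(g)=\rho_{A,c}\circ g|_A$), so in effect you prove Lemma~\ref{lem:BusemanRetraction} in order to deduce Lemma~\ref{lem:unip}. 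Your route is sound: the equivariance and composition identities for $\rho_{A_0,c}$ do hold in the hovel setting (they follow from the uniqueness of the germ-fixing Weyl-isomorphism supplied by (MA2)/(MA4) together with the fact that $\Aut(\A)$ normalizes $W^a$), the linear part of $\psi_g$ is killed by the simple transitivity of $W^v$ on vectorial chambers, and your identification of $\ker\nu$ with $G_c^0$ via the pointwise-fixed common subsector is the same argument the paper uses for the kernel of $\beta_c$. What your approach buys is more than the lemma asks for --- in particular the abelianness of $G_c/G_c^0$, which the paper needs later in Lemma~\ref{lem:Levi} --- at the cost of first establishing the retraction identities, which the paper's direct sector-fixing observation avoids entirely.
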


\begin{proof}
It is clear that $G_c, G_c^0$ are subgroups of $G$, as an element $g \in G_c$ fixing a point $x$ of $\Delta$ will also fix (pointwise) an entire sector that emanates from $x$ and pointing to $c$.  This is also used to prove that $G_c^{0}$ is normal in $G_c$ (see ~\cite[Section 3]{CaCi}).
\end{proof}

\begin{lemma}
\label{lem:BusemanRetraction}
Let  $\Delta$ be an affine ordered hovel and let $G \leq \Aut(\Delta)$ be any group of vectorially Weyl-automorphisms. Let $c \in \Ch(\partial \Delta)$ be a chamber at infinity and $A$ be an apartment of $\Delta$, whose boundary contains $c$. Then for any $g \in G_c$, the map 
$$\beta_c(g) \colon A \to A : x \mapsto \rho_{A, c}(g(x))$$
is an automorphism of the apartment $A$, acting as a (possibly trivial) translation. Moreover the map
$$\beta_c \colon G_c \to \Aut(A)$$
is a group homomorphism whose kernel coincides with $G_c^0$.
\end{lemma}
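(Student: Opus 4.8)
The plan is to realize $\beta_c(g)$ as a composition of apartment isomorphisms and to exploit that, on every apartment whose boundary contains $c$, the retraction $\rho_{A,c}$ is exactly the Weyl-isomorphism onto $A$ fixing the sector-germ of $c$.

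First I would check that $\beta_c(g)$ is an automorphism of $A$ acting as a translation. Since $g\in G_c$ fixes $c$ and maps apartments to apartments, $g(A)$ is an apartment with $c\in\partial(g(A))$, and $g|_A\colon A\to g(A)$ is a vectorially Weyl-isomorphism. By the construction of the retraction in Section~\ref{1.3}, the restriction $\rho_{A,c}|_{g(A)}\colon g(A)\to A$ is precisely the Weyl-isomorphism fixing the sector-germ of $c$; hence $\beta_c(g)=\rho_{A,c}|_{g(A)}\circ g|_A$ is a bijection $A\to A$ obtained by composing two isomorphisms, so $\beta_c(g)\in\Aut(A)$. To see it acts as a translation, fix $f_0\in Isom^w(\A,A)$ and let $C^v$ be the direction of the sector of class $c$ in these coordinates; then $f_0^{-1}\circ\beta_c(g)\circ f_0\in(W^v\ltimes V)\cap\Aut(\A)$ has linear part $w\in W^v$. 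A representative sector $Q\subset A$ of class $c$ is sent by $g$ to a class-$c$ sector of $g(A)$, which $\rho_{A,c}|_{g(A)}$ (fixing the $c$-germ) sends back to a class-$c$ sector of $A$; thus $\beta_c(g)$ stabilizes the sector-germ of $c$, so $w(C^v)=C^v$. Since $W^v$ acts simply transitively on vectorial chambers, $w=\id$ and $\beta_c(g)$ is a translation.

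Next, for the homomorphism property, I would establish the equivariance identity
$$\rho_{A,c}\circ g=\rho_{A,c}\circ g\circ\rho_{A,c}\qquad(g\in G_c).$$
Granting this, for $x\in A$ one computes $\beta_c(g)\beta_c(h)(x)=\rho_{A,c}(g(\rho_{A,c}(h(x))))=\rho_{A,c}(g(h(x)))=\beta_c(gh)(x)$, using $\rho_{A,c}|_A=\id$. To prove the identity, fix $z\in\Delta$ and an apartment $B\ni z$ with $c\in\partial B$, and set $\phi=\rho_{A,c}|_B\colon B\to A$, so that $\phi$ fixes pointwise a representative sector $Q\subset A\cap B$ of the $c$-germ and $\rho_{A,c}(z)=\phi(z)$. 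Writing $\psi=\rho_{A,c}|_{g(B)}$ and $\eta=\rho_{A,c}|_{g(A)}$, the two affine maps $\psi\circ g|_B$ and $\eta\circ g|_A\circ\phi$ from $B$ to $A$ agree on the open set $Q$: indeed $\phi|_Q=\id$, while $g(Q)\subset g(A)\cap g(B)$ is a class-$c$ sector on which both $\psi$ and $\eta$ coincide with $\rho_{A,c}$. As isomorphisms of apartments are affine and affine maps agreeing on a nonempty open subset coincide, we get $\psi(g(z))=\eta(g(\phi(z)))$, which is exactly the identity evaluated at $z$. The existence of $B$ and of the common sector $Q$ rests on axioms (MA3)--(MA4) applied to the splayed chimney-germ determined by $c$.

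Finally I would identify the kernel. If $g\in G_c^0$ then $g$ fixes some point $m\in\Delta$; applying the equivariance identity with $z=m$ gives $\beta_c(g)(\rho_{A,c}(m))=\rho_{A,c}(g(m))=\rho_{A,c}(m)$, so the translation $\beta_c(g)$ has a fixed point and hence is trivial, i.e. $g\in\Ker\beta_c$. Conversely, suppose $\beta_c(g)=\id$ and pick a sector $Q_0\subset A\cap g(A)$ of class $c$; writing $\rho_{A,c}|_{g(A)}=\chi^{-1}$ with $\chi\colon A\to g(A)$ the Weyl-isomorphism fixing the sector-germ of $c$, the hypothesis reads $\chi^{-1}(g(x))=x$ for $x\in Q_0$, i.e. $g(x)=\chi(x)=x$ since $\chi$ fixes the $c$-germ. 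Thus $g$ fixes $Q_0$ pointwise, whence $g\in G_c^0$. The main obstacle is the equivariance identity: the care required is entirely in tracking the sector-germ of $c$ across the four apartments $A,B,g(A),g(B)$ and in invoking the hovel axioms to guarantee the common class-$c$ sectors on which the various restrictions of $\rho_{A,c}$ coincide, since in a hovel one cannot rely on two arbitrary points lying in a common apartment.
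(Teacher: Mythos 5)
Your proof is correct and follows essentially the same route as the paper's: both realize $\beta_c(g)$ as the composition of $g|_A$ with the Weyl-isomorphism $g(A)\to A$ given by the retraction, verify the homomorphism identity by reducing to a common class-$c$ sector on which the retraction acts trivially, and characterize the kernel via fixed points in $A\cap g(A)$. Your write-up is simply more explicit than the paper's (the global equivariance identity $\rho_{A,c}\circ g=\rho_{A,c}\circ g\circ\rho_{A,c}$ and the appeal to simple transitivity of $W^v$ on vectorial chambers spell out steps the paper dismisses as ``easy to see'').
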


\begin{proof}
For $g\in G_c$, $g(A)$ is an apartment of $\Delta$ that contains the chamber $c$ in its boundary at infinity $\partial g(A)$.
 By the definition of the retraction $\rho_{A, c}$, as $g$ is vectorially Weyl and $c \in \Ch(\partial A) \cap \Ch(\partial g(A))$, it is easy to see that indeed $\beta_c(g) $ is an element of $\Aut(A)$, that can act as a translation or  can fix a point of $A$.

Let us prove that $\beta_c$ is a group homomorphism, whose kernel coincides with $G_c^0$. Let $g, h \in G_c$. Because $c \in \Ch(\partial A) \cap \Ch(\partial g(A)) \cap Ch(\partial h(A))$, there exists a common sector $Q_{x,c}$ in $\Ch(\partial A) \cap \Ch(\partial g(A)) \cap Ch(\partial h(A))$. This implies that for $\beta_c$ to be a group homomorphism it is enough to consider a point $y$ far away in the interior of the sector  $Q_{x,c}$ such that $ \rho_{A,c}(h(y))= h(y)$. We have that $$\beta_c(g) \beta_c(h) (y)= \beta_c(g) (\rho_{A, c}(h(y)))= \beta_c(g)(h(y))=\rho_{A, c}(g(h(y)))=\beta_c(gh)(y).$$

It is clear from the definition that $G_c^0$ is contained in the kernel of $\beta_c$. Let now $g \in \Ker(\beta_c)$. This implies that $g$ fixes a point in the intersection $A \cap g(A)$; therefore, $g \in G_{c}^{0}$. The conclusion follows.
\end{proof}

\begin{lemma}
\label{lem:Levi}
Let $\Delta$ be an affine ordered hovel and let $G \leq \Aut(\Delta)$ be a subgroup of vectorially Weyl-automorphisms that acts strongly transitively on $\partial \Delta$.

Then for any pair $c, c'$ of opposite chambers at infinity, every $G_c^0$--orbit on the set of chambers opposite $c$ is invariant under $G_{c, c'}:=G_c\cap G_{c'}$. 
\end{lemma}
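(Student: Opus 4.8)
The plan is to bypass any direct geometry and instead extract the statement from the abelian quotient $G_c/G_c^0$ furnished by Lemma~\ref{lem:BusemanRetraction}, reducing everything to a one-line fact about transitive actions of abelian groups. First I would record the bookkeeping. Fix the opposite pair $c,c'$ and write $\mathcal{O}$ for the set of $G_c^0$--orbits on $\Opp(c)$. Since $G_c^0\trianglelefteq G_c$ by Lemma~\ref{lem:unip}, the group $G_c$ permutes $\mathcal{O}$, and because $G_c^0$ fixes each of its own orbits this action factors through $Q:=G_c/G_c^0$. Note also that for $g\in G_{c,c'}\subseteq G_c$ and $d\in\Opp(c)$ one has $g\cdot(G_c^0\,d)=(gG_c^0g^{-1})(gd)=G_c^0(gd)$ by normality; hence the assertion ``every $G_c^0$--orbit on $\Opp(c)$ is $G_{c,c'}$--invariant'' is literally the statement that $G_{c,c'}$ acts \emph{trivially} on $\mathcal{O}$. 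This is the reformulation I would aim to prove.

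Next I would assemble the two structural inputs. On one hand, Lemma~\ref{lem:BusemanRetraction} identifies $Q=G_c/G_c^0$, via $\beta_c$, with a group of translations of an apartment $A$ with $c\in\partial A$; in particular $Q$ is \emph{abelian}. On the other hand, strong transitivity of $G$ on the twin building $\partial\Delta$ yields that $G_c$ is transitive on $\Opp(c)$: strong transitivity gives transitivity on pairs (twin apartment, chamber of that apartment), hence on ordered pairs of opposite chambers (using that a pair of opposite chambers lies in a unique twin apartment), and restricting to pairs whose first coordinate is $c$ shows that $G_c$ carries any $d\in\Opp(c)$ to any $d'\in\Opp(c)$. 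Consequently $Q$ acts \emph{transitively} on $\mathcal{O}$. (Here $G$ being vectorially Weyl guarantees it is type-preserving, so that opposite chambers are sent to opposite chambers.)

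Finally I would conclude with the abelian transitivity argument. Since $Q$ is abelian and acts transitively on $\mathcal{O}$, all point-stabilizers in $Q$ are conjugate, hence equal; call this common normal subgroup $N\trianglelefteq Q$, so that $N$ stabilizes \emph{every} element of $\mathcal{O}$. Now $G_{c,c'}$ fixes $c'$, whence $\beta_c(G_{c,c'})$ stabilizes the particular orbit $[c']\in\mathcal{O}$; therefore $\beta_c(G_{c,c'})\subseteq N$. As $N$ stabilizes every orbit, so does $\beta_c(G_{c,c'})$, and since the $G_c$--action on $\mathcal{O}$ factors through $Q$, the group $G_{c,c'}$ itself fixes every point of $\mathcal{O}$. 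By the reformulation above, this is exactly the claim.

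The heart of the proof is the recognition that $G_c/G_c^0$ is abelian (from the translation description of $\beta_c$) \emph{together with} transitivity of $G_c$ on $\Opp(c)$; once both are available, the elementary fact that an abelian group acting transitively has a single common orbit-stabilizer finishes the argument with no further geometry. I expect the only step needing genuine care in the hovel setting is the transitivity of $G_c$ on $\Opp(c)$, which rests on the uniqueness of the twin apartment through a pair of opposite chambers at infinity and on the type-preserving property of $G$; by contrast, the containment $\beta_c(G_{c,c'})\subseteq N$ is immediate, since $G_{c,c'}$ by definition fixes $c'$.
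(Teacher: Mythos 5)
Your proposal is correct and follows essentially the same route as the paper: both arguments rest on the transitivity of $G_c$ on $\Opp(c)$ (extracted from strong transitivity on $\partial\Delta$) together with the abelianness of $G_c/G_c^0$ supplied by Lemma~\ref{lem:BusemanRetraction}. The paper packages the final step as a direct computation with the normal subgroup $H=G_{c,c'}G_c^0\trianglelefteq G_c$, namely $G_{c,c'}(G^0_c(d))=H(g(c'))=g(H(c'))=G^0_c(d)$, which is precisely your observation that an abelian group acting transitively has a single common point-stabilizer.
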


\begin{proof}
Let $c, c'$ be a pair of opposite chambers at infinity of $\Delta$ and let $d  \in \Opp(c)$. As $G$ acts strongly transitively on $\partial \Delta$ there is some $g \in G_c$ with $d = g(c')$. As the quotient $G_c/G^0_c$ is abelian by Lemma~\ref{lem:BusemanRetraction}, we have that the subgroup $H:=G_{c, c'}G_{c}^{0} $ is normal in $G_c$ and 
$$\begin{array}{rcl}
G_{c, c'}(G^0_c(d)) & = & H(g(c')) \\
&= & g(H(c'))\\
& = & g(G^0_c.G_{c, c'}(c'))\\
& = & gG^0_c(c')\\
& = & G^0_c(g(c'))\\
&=& G^0_c(d).
\end{array}
$$
Thus the $ G^0_c$--orbit of $d$ is indeed invariant by $G_{c, c'}$, as claimed. It follows that the $H$--orbits on $\Opp(c)$ coincide with the $G^0_c$--orbits.
\end{proof}

\section{The cone topology on affine ordered hovels}
\label{sec::cone_top_hovel}

Let $\Delta$ be an affine ordered hovel. As in the case of $\CAT(0)$ spaces we would like to define a topology on the boundary $\partial \Delta $ of the hovel $\Delta$, which does not depend on the chosen base point. Recall that a hovel is not necessarily a geodesic metric space, therefore, we cannot apply the \cat theory. For the purpose of this article, it would be enough to define a cone topology only on the set of chambers at infinity; this set is denoted by $\Ch(\bd \Delta)$. 

\par In addition, by a chamber at infinity we mean the interior of it and we choose, in any chamber $c$ at infinity, an ideal point $\qx_c$, called its \textbf{barycenter}. Moreover, for any two opposite ideal chambers $c$ and $c_{-}$, we impose that the corresponding barycenters $\qx_{c}$ and $\qx_{c_{-}}$ are also opposite. If $G$ is a group of vectorially Weyl-automorphisms of $\QD$, we may suppose that it permutes these barycenters.

\begin{definition}
\label{def::standard_open_neigh_cone_top}
Let $\Delta$ be an affine ordered hovel, $x \in \Delta$ be a point and $c \in  \Ch(\bd \Delta)$ be a chamber at infinity. Let $\xi_c$ be the barycenter of $c$. By the definition of a hovel, we know that there exists an apartment $A \subset \Delta$ such that $x \in A$ and $c \in \Ch(\bd A)$. As $A$ is an affine space, consider the 
ray $[x, \xi_c) \subset A$ issuing from $x$ and corresponding to the barycenter $\xi_c$ of c. 
Let $r\in [x, \xi_c)$ and we define the following subset of $\Ch(\bd \Delta)$
$$
U_{x, r, c}:= \{ c' \in \Ch(\bd \Delta) \; \vert \; [x, r]
\subsetneq [x, \xi_{c'}) \cap [x, \xi_{c})\}.
$$ The subset $U_{x, r, c}$ is called a \textbf{standard open neighborhood} in $Ch(\QD)$ of the (open) chamber $c$ with base point $x$ and gate $r$. 
\end{definition}

\begin{definition}
\label{def::cone_top_on_chambers}
Let $\Delta$ be an affine ordered hovel and let $x$ be a point of $\Delta$. The \textbf{cone topology} $\Top_{x}( \Ch(\bd \Delta))$ on $\Ch(\bd \Delta)$, with base point $x$, is the topology generated by the standard open neighborhoods $U_{x, r, c}$, with $c \in \Ch(\bd )$ and $r\in [x, \xi_c)$.
\end{definition}

\par Actually, $c'\in  U_{x, r, c}$ means that $Q_{x,c}\cap Q_{x,c'}\supset cl([x,r])$ 
 is a ``big'' part of $Q_{x,c}$. It is easy therefore, to see that the topology $\Top_{x}( \Ch(\bd \Delta))$ does not depend of the choice of the barycenters.

\begin{proposition}
\label{prop::independence_base_point}
Let $\Delta$ be an affine ordered hovel and let $x,y \in \Delta$ two different points. Then the cone topologies $\Top_{x}( \Ch(\bd \Delta))$ and $\Top_{y}( \Ch(\bd \Delta))$ are the same.
 \end{proposition}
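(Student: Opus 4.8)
The plan is to prove the stronger statement that, for every chamber $c \in \Ch(\bd \Delta)$, the neighbourhood filters of $c$ in $\Top_{x}( \Ch(\bd \Delta))$ and in $\Top_{y}( \Ch(\bd \Delta))$ coincide; since a topology is determined by its neighbourhood filters, this yields $\Top_{x}( \Ch(\bd \Delta)) = \Top_{y}( \Ch(\bd \Delta))$. First I would record that, for a fixed base point $x$, the family $\{U_{x,r,c} : r \in [x,\xi_c)\}$ is a neighbourhood basis of $c$: it is nested (if $r'$ lies beyond $r$ on $[x,\xi_c)$ then $U_{x,r',c} \subseteq U_{x,r,c}$, since the required common part of the rays only grows), and any generator $U_{x,r'',c''}$ containing $c$ already contains some $U_{x,r,c}$ (if $[x,\xi_c)$ and $[x,\xi_{c''})$ agree past $r''$, then every $c'$ whose ray agrees with that of $c$ far enough also agrees with that of $c''$ past $r''$). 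Thus it suffices to show that the families $\{U_{x,r,c}\}_r$ and $\{U_{y,s,c}\}_s$ mutually refine one another, and by the symmetric roles of $x$ and $y$ it is enough to produce, for each $r$, some $s$ with $U_{y,s,c}\subseteq U_{x,r,c}$.

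The geometric heart is a fellow-travelling statement, for which I would first arrange that $x$ and $y$, although possibly not contained in a common apartment, each share an apartment with one deep point pointing to $c$. The sector-germ determined by $c$ is the germ of a splayed (hence solid) chimney, so by (MA3) there are apartments $A\ni x$ and $A'\ni y$ each having $c$ in its boundary, with $Q_{x,c}\subseteq A$ and $Q_{y,c}\subseteq A'$. By the standard fact (a consequence of (MA3)--(MA4)) that two apartments both having $c$ in their boundary meet in a subsector of direction $c$, the intersection $A\cap A'$ contains such a subsector; intersecting it inside $A$ with $Q_{x,c}$ and inside $A'$ with $Q_{y,c}$, and using that two sectors of $\A$ with the same direction share a subsector, produces a point $z$ with $Q_{z,c}\subseteq Q_{x,c}\cap Q_{y,c}$. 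By pushing $z$ far along the barycentre direction (the interior direction absorbs the bounded offset) I may take it as deep as I wish. Now $z$ lies in a common apartment with $x$ and in a common apartment with $y$, so the problem reduces to the following single-apartment statement: whenever two points $p,q$ lie in one apartment $A$ with $c\in\Ch(\bd A)$, the neighbourhood filters of $c$ with respect to $p$ and to $q$ agree. Two applications of this, to $(x,z)$ and to $(y,z)$, then give the proposition.

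Finally I would prove that single-apartment statement. When $q$ lies on the ray $[p,\xi_c)$ it is immediate, since for any $c'$ whose ray from $p$ already passes $q$ the intersections $[p,\xi_{c'})\cap[p,\xi_c)$ and $[q,\xi_{c'})\cap[q,\xi_c)$ differ only by the segment $[p,q]$; the genuine case is when $q$ is offset from $[p,\xi_c)$. There the rays $[p,\xi_c)$ and $[q,\xi_c)$ are parallel affine rays inside the Euclidean space $A$, offset by a bounded vector, and the comparison is the classical base-point independence of the cone topology in a Euclidean (\cat) apartment: if $[p,\xi_{c'})$ agrees with $[p,\xi_c)$ out to a deep point $r$, then $[q,\xi_{c'})$ agrees with $[q,\xi_c)$ out to a point $s$ with $s\to\xi_c$ as $r\to\xi_c$. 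The main obstacle is that $c'$ ranges over all of $\Ch(\bd \Delta)$, so the rays $[p,\xi_{c'})$ and $[q,\xi_{c'})$ generally leave $A$; what saves the argument is that only their initial segments of agreement with $[p,\xi_c)\subseteq A$ (resp.\ with $[q,\xi_c)\subseteq A$) enter the definitions of $U_{p,\cdot,c}$ and $U_{q,\cdot,c}$, and these segments are intrinsic by (MA2) and (MAO). To keep everything inside $A$ I would use the retraction $\rho_{A,c}$, which fixes $A$ pointwise and restricts to an isomorphism on every apartment having $c$ in its boundary: it carries the relevant initial segments of the rays from $p$ and $q$ into $A$ without altering their overlaps, reducing the fellow-travelling estimate to the purely affine one inside the single apartment $A$. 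The delicate point throughout is precisely that $\Delta$ carries no global geodesic metric and that two of its points need not lie in a common apartment, which is why this detour through the common subsector and the retraction is needed in place of the direct convexity argument available in the \cat setting.
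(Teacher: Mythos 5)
Your overall architecture coincides with the paper's: both arguments reduce the proposition to producing, inside each standard neighbourhood $U_{y,r,c}$, a standard neighbourhood based at $x$, by choosing an intermediate point $z$ deep in $Q_{x,c}\cap Q_{y,c}$ and applying twice a local comparison lemma for two points lying in a common apartment with $c$ at infinity (this is exactly the paper's Lemma~\ref{lem::independence_base_point}). Your neighbourhood-basis remark and the construction of $z$ are fine. The gap is in your proof of the local lemma. You propose to transport the ray $[p,\xi_{c'})$ into $A$ by the retraction $\rho_{A,c}$ and assert that this does not alter the overlap $[p,\xi_{c'})\cap[p,\xi_c)$. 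Only one inclusion is automatic: points of the true overlap already lie in $A$ and are fixed by $\rho_{A,c}$. The reverse inclusion fails in general, because $[p,\xi_{c'})$ need not be contained in a single apartment containing the sector-germ of $c$ --- (MA3) only places a \emph{shortening} of a generic ray in a common apartment with that germ --- so the restriction of $\rho_{A,c}$ to the ray is a folded, piecewise-affine path, and a fold can land on $[p,\xi_c)$ beyond the genuine overlap. Consequently, establishing $\rho_{A,c}\bigl([p,\xi_{c'})\bigr)\supseteq[p,r]$ does not yield $[p,\xi_{c'})\supseteq[p,r]$, which is what membership $c'\in U_{p,r,c}$ requires; the retraction controls the overlap from the wrong side.

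The missing ingredient is the device the paper uses instead, namely Lemma~\ref{lem::rays_line}: two preordered rays sharing a nondegenerate segment extend to a line contained in a single apartment. Applied to $\delta_2=[q,\xi_{c'})$ and $\delta_1=[r',\xi_{c_{-}})$, which share the segment $[q,r']$ precisely because $c'\in U_{q,r',c}$, it produces an apartment $B$ containing $Q_{q,c'}$ and $Q_{r',c_{-}}$, hence also $cl_A([p,r'])\supseteq Q_{p,c}\cap Q_{r',c_{-}}$. Inside the single affine space $B$ the rays $[p,\xi_{c'})$ and $[q,\xi_{c'})$ genuinely are parallel translates of one another, so the Euclidean fellow-travelling estimate you invoke becomes legitimate and shows that $[p,\xi_{c'})$ contains $[p,r'']=Q_{p,c}\cap Q_{r',c_{-}}\cap[p,\xi_c)$, whence $c'\in U_{p,r'',c}$. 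In short: your reduction steps are correct and match the paper, but the fellow-travelling step must be carried out via Lemma~\ref{lem::rays_line} (or an equivalent statement locating the two rays in one apartment together with the relevant piece of $A$), not via the retraction $\rho_{A,c}$.
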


\begin{proof}
To prove that $\Top_{x}( \Ch(\bd \Delta))$ and $\Top_{y}( \Ch(\bd \Delta))$ are the same, it is enough to show that the  identity map $ \Id : (\Ch(\bd \Delta), \Top_{x}(\Ch(\bd \Delta))) \to (\Ch(\bd \Delta), \Top_{y}(\Ch(\bd \Delta))) $ is continuous with respect to the corresponding topologies. For this it is enough to prove that for every chamber $c \in \Ch(\bd \Delta)$ and every standard open neighborhood $V$ of $c$ in $\Top_{y}(\Ch(\bd \Delta))$, there exists a standard open neighborhood $W$ of $c$ in $\Top_{x}(\Ch(\bd \Delta))$ such that $W \subset V$. 

Let us fix a chamber $c \in \Ch(\bd \Delta)$ and let $V:=U_{y, r, c}$ be a standard open neighborhood of $c$ with respect to the base point $y$, where $r>0$.

\par Let $Q_{z,c}$  be the sector with base point $z \in \Delta$ corresponding to the chamber at infinity $c \in \partial \Delta$. Notice that, for any two points $z_1,z_2 \in \Delta$, the intersection $Q_{z_1,c} \cap Q_{z_2,c}$ is not empty and moreover, it contains a subsector $Q_{z_3, c}$, where $z_3$ is a point in the interior of $Q_{z_1,c} \cap Q_{z_2,c}$. Take $z_1:=x, z_2:=y$ and $z_3:=z$, such that $z$ is in the interior of the intersection $Q_{y,c} \cap Q_{x,c}$.

To $y$ and $z$ as above apply Lemma~\ref{lem::independence_base_point}, that is stated below. We obtain the existence of a standard open neighborhood $U_{z, r', c}$ of $c$ with base point $z$ such that $U_{z, r', c} \subset U_{y, r, c}$. 
 Now for $R:=r' \in [z, \xi_{c} )$ far enough, apply Lemma~\ref{lem::independence_base_point} to the points $z$ and $x$. We obtain the existence of a standard open neighborhood  $U_{x, R', c}$ of $c$ with base point $x$ such that $U_{x, R', c} \subset U_{z, r', c}=U_{z, R, c}$. From here we have that $U_{x, R', c} \subset U_{y, r, c}$ and the conclusion follows.
\end{proof}

\begin{lemma}
\label{lem::independence_base_point}
Let $\Delta$ be an affine ordered hovel and let $c \in \Ch(\bd \Delta)$ be a chamber at infinity. Let $z_1,z_2 \in \Delta$ be two different points such that the sector $Q_{z_1,c}$ contains the point $z_2$ in its interior. Then, for every standard open neighborhood $U_{z_1,r,c}$ of $c$, with base point $z_1$, there exists a standard open neighborhood $U_{z_2,r',c}$ of $c$ with base point $z_2$ such that $U_{z_2,r',c} \subset U_{z_1,r,c}$. And vice versa, for every standard open neighborhood $U_{z_2,R,c}$ of $c$ with base point $z_2$ there exists a standard open neighborhood $U_{z_1,R',c}$ of $c$ with base point $z_1$ such that $U_{z_1,R',c} \subset U_{z_2,R,c}$.
\end{lemma}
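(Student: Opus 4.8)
The plan is to show that the two neighbourhood filters of $c$, based at $z_1$ and at $z_2$, are cofinal in one another, which is exactly what the two asserted inclusions say. First I would fix an apartment $A$ with $z_1,z_2\in A$ and $c\in\Ch(\bd A)$; such an $A$ exists because $z_2\in Q_{z_1,c}$ forces $z_1$, $z_2$ and the whole sector $Q_{z_1,c}$ into a single apartment (by item~4 of Section~\ref{1.3}). Put $v_0:=z_2-z_1$; since $z_2$ lies in the interior of $Q_{z_1,c}$ we have $v_0\in C^v\subseteq\mathcal T$, so $z_1\le z_2$, the inclusion $Q_{z_2,c}\subseteq Q_{z_1,c}$ holds, and the two barycentre rays $[z_1,\xi_c)$ and $[z_2,\xi_c)$ are parallel and differ by the bounded translation $v_0$. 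By the remark following Definition~\ref{def::cone_top_on_chambers}, $c'\in U_{x,r,c}$ means precisely that $Q_{x,c}\cap Q_{x,c'}$ contains (properly) the enclosure $cl_{\mathbb A}([x,r])$; equivalently, the two sectors issued from $x$ toward $c$ and $c'$ share a subsector whose base point lies beyond $r$.

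For the first inclusion I fix $r\in[z_1,\xi_c)$ and start from $c'\in U_{z_2,r',c}$ with $r'$ chosen far out. Then $Q_{z_2,c}$ and $Q_{z_2,c'}$ share a subsector $Q_{w,c}$ whose base point $w$ lies past $r'$; in particular $w\in Q_{z_2,c'}$, so in the apartment carrying $Q_{z_2,c'}$ the vector $w-z_2$ sits in the open vectorial chamber $C'^v$ of $c'$, at a distance from its walls that grows with $r'$. The crucial elementary estimate is then that a bounded translation cannot push a sufficiently deep interior point out of an open vectorial chamber: once $r'$ is large enough, $w-z_1=v_0+(w-z_2)$ still lies in $C'^v$ (and trivially in $C^v$), whence $Q_{w,c}\subseteq Q_{z_1,c}\cap Q_{z_1,c'}$. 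As $w$ is beyond the fixed gate $r$, this gives $c'\in U_{z_1,r,c}$, hence $U_{z_2,r',c}\subseteq U_{z_1,r,c}$.

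The reverse inclusion is proved symmetrically. Starting from $c'\in U_{z_1,R',c}$ with $R'$ far out, one has a common subsector $Q_{u,c}\subseteq Q_{z_1,c}\cap Q_{z_1,c'}$ with base $u$ past $R'$; pushing $u$ a little further one may assume $u-z_1$ is deep in both $C^v$ and $C'^v$. Then $u-z_2=(u-z_1)-v_0$ remains in $C^v\cap C'^v$, so $u\in Q_{z_2,c}\cap Q_{z_2,c'}$ and its base can be taken beyond the prescribed gate $R$ at $z_2$; this yields $U_{z_1,R',c}\subseteq U_{z_2,R,c}$. Thus only the bounded offset $v_0$, \emph{added} in the first direction and \emph{subtracted} in the second, intervenes, and the choice of $r'$ (resp.\ $R'$) is uniform because $v_0$ is fixed.

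The hard part will be the step I glossed over when speaking of $C'^v$: the vectorial chamber of $c'$ and the offset $v_0$ a priori live in different apartments, and in a hovel two points together with two chambers at infinity need \emph{not} lie in a common apartment. So before the affine estimate can be run I must exploit the hypothesis that $c'$ is close to $c$ — i.e.\ that $Q_{z_2,c}$ and $Q_{z_2,c'}$ (resp.\ the sectors from $z_1$) share a large common subsector, which is a splayed, hence solid, chimney toward the common face of $c$ and $c'$ — to produce, via axioms (MA3) and (MA4) (and, if needed, the product decomposition of Lemma~\ref{lem:TreeWalls}), a single apartment $A'$ containing $z_1,z_2$ and having both $c$ and $c'$ in its boundary. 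Once such an $A'$ is in hand, both inclusions reduce to the convex affine geometry inside $A'$ described above, the sole quantitative input being the bounded-translation estimate.
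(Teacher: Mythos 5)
There is a genuine gap, and you have in fact located it yourself in your last paragraph: your whole argument funnels through the existence of a single apartment $A'$ containing $z_1,z_2$ and having \emph{both} $c$ and $c'$ in its boundary, and such an apartment need not exist. Already for a tree an apartment is a line with exactly one end of each sign, so it can never carry two distinct same-sign chambers $c\neq c'$ at infinity; in higher rank the sectors $Q_{z_1,c}$ and $Q_{z_1,c'}$ may branch apart (think of the panel tree of Lemma~\ref{lem:TreeWalls}) so that no apartment contains both, even though $c'\in U_{z_2,r',c}$. Relatedly, your opening step is already false as stated: for $c'\neq c$ the intersection $Q_{z_2,c}\cap Q_{z_2,c'}$ can never contain a subsector $Q_{w,c}$ of direction $c$ (a translate of $C^v$ inside $z_2+C'^v$ forces $C^v\subset C'^v$, i.e.\ $c=c'$); what the hypothesis $c'\in U_{z_2,r',c}$ gives is only the bounded ``diamond'' $cl([z_2,r'])\supset Q_{z_2,c}\cap Q_{r',c_-}$. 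Neither (MA3)/(MA4) nor the splayed-chimney axiom will manufacture the apartment you want, because the common part of the two sectors is bounded, not a splayed chimney.

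The paper's proof repairs exactly this point by bringing in the \emph{opposite} chamber $c_-\in\Ch(\bd A)$ instead of $c$. One first checks (the quantitative step, playing the role of your bounded-offset estimate) that for $r'\in(z_2,\xi_c)$ far enough the enclosure $cl_A([z_1,r'])\supset Q_{z_1,c}\cap Q_{r',c_-}$ contains $[z_1,r)$. Then, given $c'\in U_{z_2,r',c}$, one applies Lemma~\ref{lem::rays_line} to the two rays $\delta_2=[z_2,\xi_{c'})$ and $\delta_1=[r',\xi_{c_-})$, which share the segment $[z_2,r']$; this produces an apartment $B$ containing $Q_{z_2,c'}$ and $Q_{r',c_-}$, hence containing $z_1$ and the whole diamond $Q_{z_1,c}\cap Q_{r',c_-}$ that $B$ shares with $A$. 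Inside $B$ all rays of direction $\xi_{c'}$ are parallel, so the segment $[z_1,r'']$ (the part of $[z_1,\xi_c)$ lying in the diamond) is an initial segment of $[z_1,\xi_{c'})$, giving $c'\in U_{z_1,r'',c}\subset U_{z_1,r,c}$; the reverse inclusion is symmetric. Your affine-translation intuition is the right heuristic for why the gates can be chosen uniformly, but the reduction to a single apartment must go through $(c',c_-)$, which are carried by a common apartment thanks to Lemma~\ref{lem::rays_line}, and not through $(c',c)$, which in general are not.
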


\begin{proof}
Let $A$ be an apartment of $\Delta$ such that $Q_{z_1,c} \subset A$. Let $\xi_c$ be the barycenter of the chamber at infinity $c$ and consider the standard open neighborhoods $U_{z_1,r,c}$ and $U_{z_2,R,c}$.  

\medskip
Our first claim is that there exists $r' \in (z_2,\xi_c)$ far enough, such that the enclosure $cl_{A}([z_1,r'])$ contains the segment $[z_1, r)$ and also that there exists $R' \in (z_1, \xi_c)$, far enough, such that $cl_{A}([z_2,R']) \supset [z_2,R]$.  Indeed, this is true because for every point $x$ in the interior of $Q_{z_1,c}$ we have that $cl_{A}([z_1,x]) \supset (Q_{z_1,c} \cap Q_{x,c_{-}})$, where $c_{-} \in \Ch(\bd A)$ denotes the chamber opposite $c$ and $[z_1,x]$ is the geodesic segment with respect to the affine space $A$. By taking $r'$, and respectively, $R'$ sufficiently far enough, the claim follows. 

\medskip
Let us prove the first assertion. Let $r' \in (z_2,\xi_c)$ satisfying the above claim and let $r'' \in (z_1,\xi_c)$ such that $[z_1,r'']= Q_{z_1,c} \cap Q_{r',c_{-}} \cap [z_1,\xi_c)$. This implies that $U_{z_1,r'',c} \subset U_{z_1,r,c}$. Next we want to prove that in fact $U_{z_2,r',c}\subset U_{z_1,r'',c} \subset U_{z_1,r,c}$.

Let $c'\in U_{z_2,r',c}$; so $[z_2,r']\subsetneq [z_2,\xi_{c'})\cap[z_2,\xi_c)$.
We may apply Lemma~\ref{lem::rays_line} (see below) to $\delta_2=[z_2,\xi_{c'})$ and $\delta_1=[r',\xi_{c_{-}})$.
We get an apartment $B$ containing these two rays, hence, their enclosures $Q_{z_2,c'}$ and $Q_{r',c_{-}}$. Thus, $c_{-},c' \in \Ch(\partial B)$, and  $B$ contains $cl_A(z_1,r')\supset Q_{z_1,c}\cap Q_{r',c_{-}}$.
In $B$ all rays of direction $\xi_{c'}$ are parallel to $[z_2,\xi_{c'})$, hence to $[z_2,r']$; or $[z_1,r'']\subset A\cap B$.
So $[z_1,r'']\subset[z_1,\xi_{c'})$ and $c'\in U_{z_1,r'',c} $.
This concludes that $U_{z_2,r',c}\subset U_{z_1,r'',c} \subset U_{z_1,r,c}$.

Let us prove the second assertion. From our first claim there exists $R' \in (z_1, \xi_c)$, far
enough, such that $cl_{A}([z_1,R']) \supset (Q_{z_1,c} \cap Q_{R',c_{-}}) \supset [z_2,R]$. 
If $c'\in U_{z_1,R',c}$, we apply Lemma~\ref{lem::rays_line} to $\delta_2=[z_1,\xi_{c'})$ and $\delta_1=[R',\xi_{c_{-}})$ and one easily proves that $c'\in U_{z_2,R,c} $, hence $U_{z_1,R',c} \subset U_{z_2,R,c}$.
\end{proof}

\begin{lemma} 
\label{lem::rays_line}
Let $\delta_1,\delta_2$ be two preordered rays in apartments $A_1,A_2$ of an affine ordered hovel $\Delta$, with origins $x_1,x_2$. Suppose $x_1\not=x_2$ and $x_1,x_2\in \delta_1\cap\delta_2$ (hence $[x_1,x_2]\subset \delta_1\cap\delta_2$). Then $ \delta_1\cup\delta_2$ is a line in an apartment $A$ of $\Delta$.
\end{lemma}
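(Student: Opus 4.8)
The plan is to reduce everything to the production of a single apartment $A$ with $\delta_1\cup\delta_2\subseteq A$, since the ``line'' conclusion is then immediate. Indeed, inside the affine space $A$ each $\delta_i$ is an ordinary affine ray; the two rays overlap on $[x_1,x_2]$ and issue from it in opposite directions, so their union is the affine line of $A$ through $x_1$ and $x_2$, and by (MAO) the hovel segments along it coincide with the affine ones, making it a genuine line of $\Delta$. For the set-up I would argue, without loss of generality, that $x_1\le x_2$ (otherwise exchange the indices). Then $\delta_1$ has sign $+$ and $\delta_2$ has sign $-$, and, transporting to the model apartment via Weyl-isomorphisms $\A\to A_i$ and using (MAO) to identify $[x_1,x_2]$ in $A_1$ and in $A_2$, one checks that $\delta_2$ leaves $x_2$ in the direction exactly opposite to that of $\delta_1$, both being collinear with the common segment. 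Writing $u$ for this common direction (the class of $x_2-x_1$, which lies in $\mathcal{T}$), the whole problem becomes: find one apartment containing the preordered ``line'' carried by $u$ through $x_1,x_2$.

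To build $A$ I would thicken the rays to sectors. Choosing a vectorial chamber $C^v$ with $u\in\overline{C^v}$ gives $\delta_1\subseteq\overline{x_1+C^v}\subseteq A_1$ and $\delta_2\subseteq\overline{x_2-C^v}\subseteq A_2$; the corresponding sector-germs $c_1:=\mathrm{germ}(x_1+C^v)$ and $c_2:=\mathrm{germ}(x_2-C^v)$ are opposite chambers at infinity, with $\partial\delta_1\in c_1$ and $\partial\delta_2\in c_2$. Applying (MA3) to the germ of the splayed chimney $c_1$ and the (splayed, hence solid) chimney germ $c_2$ yields an apartment $A$ with $c_1,c_2\in\partial A$, so $A$ carries the two ends of the prospective line.

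The heart of the proof, and the step I expect to be the main obstacle, is to \emph{pin the base}: to force $x_1,x_2\in A$ and to realise $\delta_1,\delta_2$ as the two halves of a single line of $A$ rather than as two distinct parallel rays. The difficulty is structural: $A_1$ and $A_2$ may meet only in $cl([x_1,x_2])$, sharing nothing at infinity (the forward data $c_1$ belongs to $A_1$ and the backward data $c_2$ to $A_2$), so one cannot bridge $A$ to both old apartments by a single use of (MA4). My plan is to exploit the shared \emph{preordered} segment. Since $A$ and $A_1$ share $c_1$ they share an actual subsector of direction $c_1$, and likewise $A$ and $A_2$ share a subsector of direction $c_2$; combining (MA2)/(MA4) for the Weyl-isomorphisms attached to these shared sector-germs with (MAO) (uniqueness of preordered segments in any two apartments containing their endpoints), I would transport $[x_1,x_2]$ together with initial portions of $\delta_1$ and $\delta_2$ into $A$ and verify that they lie straight on the line of $A$ joining $c_1$ to $c_2$. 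Concretely, a retraction argument $\rho_{A,c_1}$ should show that $\delta_2$, being directed straight opposite to $c_1$, is carried without distortion and identified with the backward half of that line, so that the apartment produced by (MA3) genuinely contains the whole of $\delta_1\cup\delta_2$.

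Finally, one must address the non-generic case, when $u\in\partial\mathcal{T}$: then the fattened cones $x_i\pm C^v$ need not produce splayed chimney germs and (MA3) does not apply verbatim. I would treat this either by approximating $u$ by generic directions in the interior of a chamber whose closure contains $u$, or by replacing the sector-germs with the spherical sector-face-germs of the smallest spherical face containing $u$ and rerunning the same argument; the alignment via the shared preordered segment and (MAO) is unchanged. Thus the routine part is the reduction and the affine ``line'' conclusion, while the genuine work is the base-pinning in the third paragraph, where the shared preordered segment is exactly what rules out the parallel-but-distinct alternative.
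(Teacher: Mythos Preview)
The paper does not give its own argument: it simply quotes part 2) of the proof of \cite[Prop.~5.4]{Rou11}. So there is nothing here to compare line by line; the question is whether your sketch actually closes.

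Your reduction is fine, and you have correctly located the only genuine difficulty, namely the ``base-pinning''. But the mechanism you propose for it does not work as written. Once you thicken $\delta_i$ to sector-germs $c_1,c_2$ and invoke (MA3), the apartment $A$ you obtain contains only the \emph{germs} $c_1,c_2$; nothing in (MA2)/(MA4) then forces $x_1,x_2\in A$, because those axioms give you $cl_A(\mathfrak R\cup F)$ for data already lying in \emph{both} apartments, and neither $A_1$ nor $A_2$ is known to contain the germ coming from the other side. Your proposed retraction fix has the same defect: $\rho_{A,c_1}$ restricted to $A_2$ is defined via an auxiliary apartment containing $c_1$ and the given point of $A_2$, but $A_2$ need not contain $c_1$ at infinity, so there is no reason for $\rho_{A,c_1}$ to fix $\delta_2$ pointwise; at best you get that $\rho_{A,c_1}(\delta_2)$ is a ray in $A$ parallel to the desired one, which is exactly the ambiguity you are trying to exclude. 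Thickening to full sectors also makes the opposition claim for $c_1,c_2$ delicate: $C^v$ is chosen in $A_1$ and must be transported to $A_2$ through a Weyl-isomorphism fixing $[x_1,x_2]$, and then ``opposite in $\partial\Delta$'' still has to be argued.

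The remedy, visible already in the proof of Lemma~\ref{3.6} in this paper, is not to call (MA3) on two data at infinity, but on one germ at infinity together with one \emph{local} datum (a germ of a preordered interval, used as a closed face). This anchors the resulting apartment at an actual point of $[x_1,x_2]$, and then (MA4) with $cl_A(\mathfrak R\cup F)$ contains the whole ray by convexity. Doing this once for each side and combining is exactly the manoeuvre carried out in \cite[Prop.~5.4]{Rou11}; your thicken-then-retract route does not recover this anchoring. Your treatment of the non-generic case is also only a gesture: ``approximate by generic directions'' does not interact well with (MA3)/(MA4), which are statements about fixed filters, and replacing by the smallest spherical face containing $u$ presupposes that such a face exists, which fails precisely when $u\in\partial\mathcal T$ lies in a non-spherical face.
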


\begin{proof} This is exactly what is proved in part 2) of the proof of Rousseau~\cite[Prop. 5.4]{Rou11}.
\end{proof}

\begin{lemma} \label{3.6} Let $Q_1,Q_2$ be two sectors in an affine ordered hovel $\QD$, sharing the same base point $x$, with $Q_1$ (resp.,  $Q_2$) of positive (resp., negative) direction.
We suppose $Q_1,Q_2$ are opposite, \ie there exist $y_1\in Q_1,y_2\in Q_2$ (hence $y_2\stackrel{o}{<} x\stackrel{o}{<} y_1$) such that $x\in[y_1,y_2]$.
Then there is an apartment $A$ containing $Q_1$ and $Q_2$.
\end{lemma}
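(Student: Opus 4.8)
The plan is to exhibit a single apartment $A$ containing a line $\ell$ through $x$ whose two ends are \emph{regular} ideal points, one lying in the ideal chamber $c_+:=\partial Q_1$ and the other in the opposite ideal chamber $c_-:=\partial Q_2$. This reduction suffices. Indeed, suppose $\ell\subset A$ with $x\in\ell$ and that the two ends $\eta_1,\eta_2$ of $\ell$ are regular with $\eta_1\in c_+$ and $\eta_2\in c_-$. Then $\eta_1,\eta_2\in\partial A$, and since $\partial A$ is a sub-apartment of $\partial\Delta$ while a regular ideal point lies in a unique ideal chamber, we deduce $c_+,c_-\in\partial A$. As $x\in A$, the sectors $Q_{x,c_+}$ and $Q_{x,c_-}$ are contained in $A$, and by the unicity of the sector attached to a point and an ideal chamber (Section~\ref{1.3}) these are exactly $Q_1$ and $Q_2$.

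To build such a line I would feed two anti-parallel generic rays into Lemma~\ref{lem::rays_line}. Fix an apartment $B_1$ containing $Q_1$ (hence $x$); since $y_1\in Q_1=Q_{x,c_+}$ and $x\stackrel{o}{<}y_1$, there is a regular ideal point $\eta_1\in c_+$ with $y_1\in[x,\eta_1)$, and symmetrically a regular $\eta_2\in c_-$ with $y_2\in[x,\eta_2)$. Let $\delta_1:=[y_2,\eta_1)$ and $\delta_2:=[y_1,\eta_2)$ be the unique generic rays with the indicated origins and ideal directions; each exists and lies in an apartment by the unicity statement of Section~\ref{1.3}. Granting the key claim below, that $y_1\in\delta_1$ and $y_2\in\delta_2$, the two rays share the two distinct points $y_1,y_2$ and are anti-parallel, so by Lemma~\ref{lem::rays_line} their union $\ell=\delta_1\cup\delta_2$ is a line in an apartment $A$. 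Its ends are $\eta_1$ and $\eta_2$, and $x\in[y_2,y_1]\subset\delta_1\subset\ell$, so $A$ is as required and the proof concludes as in the first paragraph.

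The main obstacle is this key claim: that the canonical ray $[y_2,\eta_1)$ is precisely the straight continuation, through $x$ and $y_1$, of the segment $[y_2,y_1]$ (and symmetrically for $\delta_2$). This is exactly where the opposition hypothesis $x\in[y_1,y_2]$ enters. I would argue as follows. Since $y_2\leq y_1$, fix an apartment $A^*$ with $y_2\leq_{A^*}y_1$; by (MAO) the segment $[y_1,y_2]$ computed in $A^*$ is the given one and passes through $x$, so $y_2,x,y_1$ are collinear in the affine space $A^*$ with $x$ between them. Extending this segment beyond $y_1$ inside $A^*$ produces a generic ray from $y_2$ through $x$ and $y_1$ to some regular ideal point $\eta^*\in\partial A^*$, and it remains to identify $\eta^*$ with $\eta_1$. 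Both are the ideal endpoint of the ray issued from $x$ through $y_1$, computed in $A^*$ respectively in $B_1$; as $A^*$ and $B_1$ share the generic preordered interval $[x,y_1]$, an application of (MA2) should identify the germs at infinity of the two extensions and give $\eta^*=\eta_1$. The unicity of the generic ray from $y_2$ to $\eta_1$ then forces $[y_2,\eta_1)$ to coincide with this extension, whence $y_1\in\delta_1$; running the same argument from the other end gives $y_2\in\delta_2$. Establishing this apartment-independence of the ideal endpoint of the extension of $[x,y_1]$ is the delicate point, and I expect it to be the technical heart of the argument.
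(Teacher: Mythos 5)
Your overall strategy---produce a line through $x$ whose two ends are regular ideal points lying in $\partial Q_1$ and $\partial Q_2$, obtain it by gluing two anti-parallel generic rays via Lemma~\ref{lem::rays_line}, and then recover the sectors inside the resulting apartment---is the same as the paper's. But there is a genuine gap at your ``key claim'', and the argument you sketch for it does not work. You want to identify the ideal endpoint $\eta^*$ of the prolongation of $[y_2,y_1]$ beyond $y_1$ inside $A^*$ with the endpoint $\eta_1$ of the ray $[x,\eta_1)\subset Q_1\subset B_1$, by applying (MA2) to the shared preordered interval $[x,y_1]$. But (MA2) only yields that $A^*\cap B_1$ contains the enclosure of (the germ of) that interval, together with a Weyl-isomorphism fixing that enclosure pointwise; it gives no control on what either apartment does beyond $y_1$. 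Two apartments agreeing on $[x,y_1]$ may diverge immediately past $y_1$: already in a tree, if $y_1$ is a branch vertex, the line $A^*$ through $y_2,x,y_1$ can continue to an end different from the end of $Q_1$, so $\eta^*\neq\eta_1$ is perfectly possible. Hence the identification fails, and with it your proof that $y_1\in[y_2,\eta_1)$. The claim itself (which is true) is essentially as hard as the lemma: it amounts to knowing that $y_2$ and the entire ray $[x,\eta_1)$ sit aligned in a common apartment, which is what one is trying to establish.

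The paper's proof avoids launching a canonical ray from the far point $y_i$ altogether. It keeps the rays $\delta_i'=[x,\eta_i)\subset Q_i$, whose ideal ends lie in $\partial Q_i$ by construction, and uses (MA3) to produce an apartment $A_i$ containing the germ at infinity of $\delta_i'$ together with the germ at $x$ of $[x,y_{3-i}]$; this yields only an arbitrarily small backward prolongation $\delta_i=\delta_i'\cup[x,x_i]$ with $x_i\in(x,y_{3-i}]$, but that is all Lemma~\ref{lem::rays_line} requires, since each origin $x_i$ then lies on both rays. The apartment $A$ produced by that lemma contains $\delta_i'$, hence $cl_A(\delta_i')\supset Q_i$. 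If you want to salvage your write-up, replace the (MA2) identification by this germ-level application of (MA3): the whole point is that one never needs the full ray from $y_2$ to $\eta_1$, only a germ of it on the far side of $x$.
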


\begin{rema*} 
\label{rem::cond_CO}
This is condition (CO) of Parreau~\cite[1.12]{Parr00}
\end{rema*}

\begin{proof} Let $\qd'_i$ be the generic ray of origin $x$ in $Q_i$ containing $y_i$.
By (MA3) there is an apartment $A_i$ containing $germ(\qd'_i)$ and $germ_x([x,y_{3-i}])$, hence also $\qd'_i$ (by convexity) and some point $x_i\in (x,y_{3-i}]$.
Now it is clear that $\qd_i=\qd'_i\cup[x,x_i]$ is a (generic) ray in $A_i$. 
Applying Lemma \ref{lem::rays_line}, there is an apartment $A$ containing $\qd_1$ and $\qd_2$.
Now, by (MA2) and (MA3), $A$ contains $cl_A(\qd'_i)\supset Q_i$.
\end{proof}

\section{A criterion for strong transitivity}\label{s4}

In this section we verify an analogue criterion for strong transitivity as in Caprace--Ciobotaru~\cite[Section 3]{CaCi}. We first start with a definition.

\begin{definition}
\label{def::good_part}
Let $\Delta$ be an affine ordered hovel and let $G \leq \Aut(\Delta)$ be a group of automorphisms. Let $A$ be an apartment of $\Delta$. We say that a subset (or a filter) $\Omega$ of the apartment $A$ is \textbf{good} with respect to the group $G$ if, for any two apartments $A,A'$ containing $\QO$, there is $g\in G_\QO^{0}:=\{ g\in G \; \vert \;  g \text{ fixes pointwise } \QO \}$ such that $A'=g(A)$ and $g : A\to A',x\mapsto g(x)$ is a Weyl-isomorphism.
\end{definition}

\par For $\QO$ a local chamber or a sector germ, $\QO$ is good if, and only if, $G_\QO^{0}$ is transitive on the set of all apartments containing $\Omega$: this is a consequence of (MA2) and (MA4) as two isomorphisms $A\to A'$ fixing $\QO$ are necessarily equal; this implies in particular that the corresponding isomorphism is Weyl.  In the same way, we also obtain that $G_\QO^{0}$ fixes pointwise $\overline\QO$ and $cl_A(\QO)$.

\begin{definition}
\label{4.2}
(See Gaussent--Rousseau~\cite[1.5]{GR14} or Rousseau~\cite[4.10]{Rou13}). Let $\Delta$ be an affine ordered hovel and let $G$ be a subgroup of $\Aut(\Delta)$. We say that $G$ acts {\bf strongly transitively} on $\Delta$ if any isomorphism involved in axioms (MA2), (MA4) may be chosen to be the restriction of an element of $G$.
\end{definition}

\par This means that each of the sets $cl_A(F)$ appearing in (MA2) and $cl_A(\g R\cup F)$ appearing in (MA4) is good with respect to the group $G$.

\par The Kac--Moody groups as in the example \ref{1.3}.5) act strongly transitively by vectorially Weyl-automorphisms on the corresponding hovels.

\begin{proposition}
\label{lem:ST:criterion}
Let  $\Delta$ be an 
affine ordered hovel and let $G \leq \Aut(\Delta)$ be a group of vectorially Weyl-automorphisms. Then the following conditions are equivalent. 

\begin{enumerate}[(i)]
\item $G$ is strongly transitive on $\Delta$;
\item
every local chamber of $\Delta$ is good;
\item
every sector-germ of $\Delta$ is good.
\end{enumerate}
\end{proposition}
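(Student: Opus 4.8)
The plan is to establish the equivalences through the cycle $(i)\Rightarrow(ii)$, $(ii)\Leftrightarrow(iii)$, and finally $(ii)\Rightarrow(i)$ (using also (iii), which by that point is equivalent to (ii)). Throughout I would lean on two facts recorded after Definition~\ref{def::good_part}: first, for $\Omega$ a local chamber or a sector-germ, $\Omega$ is good if and only if $G^0_\Omega$ is transitive on the apartments containing $\Omega$; second, the rigidity principle that a Weyl-isomorphism $A\to A'$ fixing pointwise a subset with non-empty interior is uniquely determined (two such isomorphisms agreeing on a full-dimensional set are equal, since their comparison is an automorphism of $A$ fixing an affinely spanning set). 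I would also use repeatedly that, by Lemma~\ref{lem:unip}, an element of $G_c$ fixing a point automatically fixes pointwise an entire subsector pointing to $c$.

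For $(i)\Rightarrow(ii)$, let $C=F^\ell(x,C^v)=germ_x(x+C^v)$ be a local chamber and let $A,A'$ be two apartments containing $C$. Pick $y=x+v$ with $v$ in the open cone $C^v$, so that $[x,y]$ is a generic preordered interval and $germ_x([x,y])\subset C\subset A\cap A'$. A direct computation with the half-apartments through $x$ shows $cl_A(germ_x([x,y]))=cl_A(C)$, the closed local chamber. Since $germ_x([x,y])$ is one of the filters to which (MA2) applies, strong transitivity (Definition~\ref{4.2}) provides $g\in G$ with $g(A)=A'$ fixing $cl_A(C)$ pointwise; in particular $g\in G^0_C$, so $C$ is good.

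For $(ii)\Rightarrow(iii)$: given a sector-germ $S=germ(Q)$ of direction $C^v$ and apartments $A,A'\supset S$, I would intersect the two representative subsectors to obtain a common subsector $Q''=x''+C^v\subset A\cap A'$ and apply goodness of the local chamber $C''=germ_{x''}(Q'')$ to get $g\in G^0_{C''}$ with $g(A)=A'$. As $g$ fixes $C''$ pointwise, its linear part fixes the chamber $C^v$ hence is trivial, so $g$ fixes every ideal point, in particular $c=S$; by Lemma~\ref{lem:unip} $g$ then fixes $Q''$ pointwise, whence $g\in G^0_S$ and $S$ is good. For $(iii)\Rightarrow(ii)$: given a local chamber $C=germ_x(x+C^v)$ with $c=\partial C$ and apartments $A,A'\supset C$, both contain the sector $Q_{x,c}$ and hence the sector-germ $S=germ(Q_{x,c})$; goodness of $S$ yields $h\in G^0_S$ with $h(A)=A'$ fixing some subsector $Q^\ast\subset Q_{x,c}\subset A\cap A'$ pointwise. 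Axiom (MA2) applied to the generic ray $[x,\xi_c)$ produces the unique Weyl-isomorphism $\psi_0\colon A\to A'$ fixing $cl_A(Q_{x,c})$; since $\psi_0$ and $h|_A$ both fix $Q^\ast$ pointwise, the rigidity principle forces $h|_A=\psi_0$, which fixes $x$ and $C$. Thus $h\in G^0_C$ and $C$ is good.

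Finally, for $(ii)\Rightarrow(i)$ I would show that every isomorphism occurring in (MA2) and (MA4) is realised in $G$; fix such a configuration with common enclosure $\Omega$ (namely $cl_A(F)$ or $cl_A(\g R\cup F)$) and the Weyl-isomorphism $\psi\colon A\to A'$ fixing $\Omega$. When $\Omega$ reaches infinity — this covers all of (MA4), since a splayed chimney-germ contains a subsector, and the generic-ray case of (MA2) — $\Omega$ contains a subsector $Q^\ast\subset A\cap A'$ pointing to some $c\in\partial A\cap\partial A'$; goodness of $germ(Q^\ast)$ (available by (iii)) gives $g\in G^0_{germ(Q^\ast)}$ with $g(A)=A'$ fixing a subsector pointwise, and, after passing to a subsector common with $Q^\ast$, the rigidity principle forces $g|_A=\psi$, so $\psi$ is realised. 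The hard part will be the remaining, bounded cases of (MA2) (a point, or a bounded preordered-interval germ), where $A\cap A'$ may be no larger than $\Omega$ itself and contains no subsector, so the sector-germ reduction is unavailable. Here I would instead connect $A$ to $A'$ by a finite chain of apartments, all containing $\Omega$, in which consecutive apartments share a local chamber $C_i$ with $\Omega\subset cl(C_i)$; goodness of each $C_i$ (by (ii)) produces $g_i\in G^0_{C_i}\subset G^0_\Omega$ carrying one apartment to the next, and their composite is the required element of $G^0_\Omega$ taking $A$ to $A'$. The technical heart — and the step I expect to be most delicate in the hovel setting — is the existence of such a chain, i.e. the gallery-connectivity of the apartments through $\Omega$ via local chambers dominating $\Omega$; this rests on the thickness of $\Delta$ and on the residue (link) of $\Omega$ being itself a building, together with the gluing axioms (MA3), (MA4) and Lemmas~\ref{lem::rays_line} and~\ref{3.6} to produce the intermediate apartments.
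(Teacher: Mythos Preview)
Your argument for $(iii)\Rightarrow(ii)$ contains a genuine error: you assert that if two apartments $A,A'$ contain the local chamber $C=\mathrm{germ}_x(x+C^v)$, then both contain the global sector $Q_{x,c}$ for a common ideal chamber $c=\partial C$. But a local chamber does not determine an ideal chamber independently of the ambient apartment. Already in a tree, two apartments (lines) sharing an edge-germ at $x$ may diverge arbitrarily soon on that side, so the two rays from $x$ in that direction point to distinct ends and neither apartment contains the other's ray; in higher rank the sectors $x+C^v$ computed in $A$ and in $A'$ agree only on $A\cap A'$ and need not represent the same sector-germ. Axiom (MA2) does not help here: it applies to a full generic ray, not to the germ of one at its origin. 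The paper circumvents this by \emph{not} claiming $A'\supset Q_{x,c}$: it picks $y$ in the interior of $A\cap A'\cap Q_x$, takes in $A'$ the sector $Q'_y$ based at $y$ \emph{opposite} to $Q_y\subset Q_x$ (so $Q'_y\ni x$), and invokes Lemma~\ref{3.6} (condition (CO)) to produce a third apartment $A''\supset Q_x\cup Q'_y$; hypothesis (iii) is then applied twice, to the sector-germ of $Q_x$ for the pair $(A,A'')$ and to that of $Q'_y$ for the pair $(A',A'')$.

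Your $(ii)\Rightarrow(i)$ also diverges from the paper and has gaps. A splayed chimney-germ need not contain a subsector (``splayed'' only means its direction $F^v$ is spherical, which allows $F^v$ to be a panel), so your case split does not absorb all of (MA4); and the gallery-connectivity you postpone as the ``technical heart'' is exactly the point requiring proof. The paper's route is shorter and uniform: first show every local \emph{face} $F$ is good --- two local chambers covering $F$, one chosen in $A$ and one in $A'$, lie in a common apartment $A''$ by Rousseau~\cite[Prop.~5.1]{Rou11}, giving a two-step chain $A\to A''\to A'$ through local chambers --- and then, for an arbitrary $\Omega=cl_A(F)$ or $cl_A(\mathfrak R\cup F)$, pick a maximal local face $F_1\subset\Omega$ with $\Omega\subset\supp(F_1)$; goodness of $F_1$ yields $g\in G^0_{F_1}$ with $g(A)=A'$, and affine rigidity on $\supp(F_1)$ forces $g$ to fix all of $\Omega$ pointwise. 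No bounded/unbounded dichotomy is needed.
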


\begin{proof}
(i) $\Rightarrow$ (iii): This follows from the definition of the strongly transitive action of $G$ on the hovel $\Delta$.

\medskip
(ii) $\Rightarrow$ (i):
First we claim that every local face $F$ of the hovel $\Delta$ is good. Indeed, if there exist two apartments $A$ and $A'$ such that $F \subset A \cap A'$, we consider a local chamber $C$ of $A$ and a local chamber $C'$ of $A'$ such that both cover $F$. 
By Rousseau~\cite[Prop. 5.1]{Rou11} there is an apartment $A''$ of $\Delta$ containing $\overline C \cup \overline C'$. 
 Applying our hypothesis, the claim follows.

\medskip
We need to verify that all isomorphisms involved in the definition of the hovel are induced by elements of $G$.  Therefore, let $\widetilde\QO=cl_A(F)$ or $\widetilde\QO=cl_A(\g R\cup F)$ be as in the axiom (MA2) or (MA4). We consider a closed subset $\QO\subset supp(\widetilde\QO)$ which is an element of the filter $\widetilde\QO$ and
is contained in the intersection $A \cap A'$ of two apartments $A, A'$ of $\Delta$.
 But in $\Omega$ one can find a (maximal) local face $F_1$ such that $F_1 \subset \Omega \subset supp(F_1)$, where $supp(F_1)$ is the unique affine space of minimal dimension that contains $F_1$. For this face $F_1$ we apply our above claim and we obtain an element $g$ in the pointwise stabilizer $G_{F_1}^{0} < G$ such that $g(A)=A'$, $g : A\to A',x\mapsto g(x)$ is a Weyl-isomorphism and $g$ fixes pointwise the face $F_1$. As $F_1 \subset \Omega$ and $g(A)=A'$ we also have that $g$ fixes pointwise the subset $\Omega$. The conclusion follows. 

\medskip
(iii) $\Rightarrow$ (ii):
Let $C$ be a local chamber contained in the intersection of two apartments $A$ and $A'$ of $\Delta$.
 Let $x$ be the vertex  of $C$ and consider in $A$ the sector $Q_x$ with base point $x$ that contains the chamber $C$. 
 Then $A\cap A'$ contains a neighborhood $C'$ of $x$ in $Q_x$, so $C\subset C'$.
 Let $y$ be a point in the interior of $C'$. 
 Take the sector in $A$ of the form $Q_y:= Q_x+ (y-x) \subset Q_x$. 
 In the apartment $A'$ consider the sector $Q_y'$ with base point $y$  that contains the vertex $x$, hence which is opposite  $Q_y$.
  Notice that $Q'_y \cap Q_x$ is a small neighborhood of $x$ (respectively, of $y$). 
  By applying 
 Lemma~\ref{3.6}, one concludes that there is an apartment $A''$ containing the sectors $Q_x$ and $Q'_y$. 
 So $A''$ contains $Q_x\subset cl_A(\{x\}\cup Q_y)$ and $Q_x\cap Q'_y$ is an element of the filter $C$.
  By the hypothesis, applied successively to $(S_x,A,A'')$ and $(S'_y,A',A'')$, we obtain the conclusion.

\end{proof}

\section{Existence and dynamics of strongly regular elements}
\label{sec::str_reg_elements}

\begin{definition}(See~\cite[Section~2.1]{CaCi})
\label{def::str_reg_lines}
Let $(\Delta, \sha)$ be an affine ordered hovel and let $A$ be an apartment in $\sha$. With respect to the affine structure of $A$, a line $\ell \subset A$ is called \textbf{strongly regular}, if its points at infinity lie in the interior of two opposite chambers of the twin building at infinity of $\Delta$. This also means that both associated ray germs are generic. In particular, the apartment $A$ is the unique apartment of $\Delta$ containing the strongly regular line $\ell$.
 
 Moreover, a hyperbolic element $\gamma$ of $\Aut(\Delta)$ is called \textbf{strongly regular} if it admits a strongly regular translation axis (i.e., there exists an apartment $A $ of $\Delta$ and a strongly regular geodesic line in $A$ which is a translation axis of $\gamma$). 

\end{definition}

\begin{theorem}
\label{thm:ExistenceStronglyReg}
Let $(\Delta, \sha)$ be a thick, affine, semi-discrete, ordered hovel such that $\A^v=(V,W^v)$ has no irreducible factor of affine type \red{and such that $\Delta$ is not a tree}. Let $G$ be a vectorially Weyl subgroup of $\Aut(\Delta)$ that acts strongly transitively on the twin building at infinity $\partial\QD$ of $\Delta$ and \red{satisfies condition (LST)}.
Then $G$ contains a strongly regular hyperbolic element. 
\end{theorem}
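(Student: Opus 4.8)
The plan is to reduce the theorem to the construction of a single element of $G$ that stabilises one apartment $A$ and acts on it as a translation by a vector in the open fundamental chamber. Concretely, fix a pair $c,c_-$ of opposite chambers at infinity and, by (MA3), an apartment $A$ with $c,c_-\in\Ch(\partial A)$; identify $A$ with $V$ so that $c$ is the germ of $C^v_f$ and $c_-$ the germ of $-C^v_f$. If I can produce $\gamma\in G$ with $\gamma(A)=A$ and $\gamma|_A$ equal to a translation $t_v$ with $v\in C^v_f$ (that is, $\alpha_i(v)>0$ for all $i\in I$), then the line $\ell\subset A$ of direction $v$ has its two ends in the interiors of the opposite chambers $c\in\partial^+\Delta$ and $c_-\in\partial^-\Delta$; hence $\ell$ is strongly regular and is a translation axis of $\gamma$, so $\gamma$ is the sought element in the sense of Definition~\ref{def::str_reg_lines}. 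It thus suffices to manufacture such a regular translation inside $G$.

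The engine for producing translations is condition (LST) together with Lemma~\ref{lem:TreeWalls}. For each $i\in I$ let $\sigma_i$ be the panel of $c$ of type $i$ and $\sigma_i'$ the opposite panel, which is a panel of $c_-$; both lie in $\partial A$, so $A\subset P(\sigma_i,\sigma_i')$ and $A$ corresponds to a line $\ell_i$ of the panel tree $T(\sigma_i,\sigma_i')$. Since $\Delta$ is semi-discrete, thick and not a tree, $T(\sigma_i,\sigma_i')$ is a genuine thick discrete tree by Lemma~\ref{lem:TreeWalls}; as $G_{\sigma_i,\sigma_i'}$ acts strongly transitively on it by (LST), the stabiliser of the apartment $\ell_i$ is transitive on its edges and therefore contains an element $\gamma_i$ translating $\ell_i$. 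This $\gamma_i$ fixes the two ends of $\ell_i$ and the $\R^{n-1}$-factor of $P(\sigma_i,\sigma_i')$, hence stabilises $A$; since it also stabilises $\sigma_i$ it fixes all ideal points of $\sigma_i$, so its linear part fixes $\ker\alpha_i=\supp(\sigma_i)$ pointwise. The only elements of $W^v$ doing so are $1$ and $r_{\alpha_i}$, and $r_{\alpha_i}$ would swap the two ends of $\ell_i$, which $\gamma_i$ does not; thus the linear part is trivial and $\gamma_i|_A$ is a pure translation $t_{u_i}$ (equivalently $\gamma_i\in G_c$ and $\beta_c(\gamma_i)=t_{u_i}$ by Lemma~\ref{lem:BusemanRetraction}). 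Since $\gamma_i$ translates $T(\sigma_i,\sigma_i')$ nontrivially and the vertices of this tree are the type-$i$ walls, $t_{u_i}$ moves these walls, i.e.\ $\alpha_i(u_i)\neq 0$; after replacing $\gamma_i$ by $\gamma_i^{-1}$ if necessary we may assume $\alpha_i(u_i)>0$.

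All the $\gamma_i$ stabilise the same apartment $A$ and restrict there to commuting translations, so for any integers $m_i$ the element $\gamma=\prod_{i\in I}\gamma_i^{m_i}$ stabilises $A$ and satisfies $\gamma|_A=t_v$ with $v=\sum_i m_i u_i$. It remains to choose the $m_i$ so that $v\in C^v_f$, i.e.\ $\sum_i m_i\,\alpha_j(u_i)>0$ for every $j\in I$. This is the crux of the argument, and the step I expect to be the main obstacle: the components of $u_i$ along the wall $\ker\alpha_i$ are not pinned down by the tree construction, so one must show that the integral cone generated by the $u_i$ nevertheless meets the open chamber $C^v_f$. The way I would handle this is to arrange the axes $\ell_i$ so that the $u_i$ are proportional to the coroots $\alpha_i^\vee$ (or otherwise span the essential part of $V$) and then solve the resulting linear inequalities.

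This last step is exactly where the hypothesis that $\A^v=(V,W^v)$ has no irreducible factor of affine type becomes essential, and it also explains why that hypothesis is necessary. Working one irreducible factor at a time, Vinberg's trichotomy for generalized Cartan matrices provides, in the finite and indefinite cases, a positive vector on which the matrix $M=(\alpha_j(\alpha_i^\vee))$ takes values of a single sign, allowing one to reach the interior of $C^v_f$ (or of $-C^v_f$, which serves equally well by passing to the opposite end of the line). In the affine case, by contrast, the transverse coroot directions all lie on the null hyperplane $\{\delta=0\}$ of the positive imaginary root $\delta$, whereas $C^v_f\subset\{\delta>0\}$, so no integral combination of the $u_i$ can be regular; equivalently, the open Tits cone of an affine factor is a half-space and admits no bi-regular line. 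Once admissible integers $m_i$ are fixed, the element $\gamma=\prod_i\gamma_i^{m_i}$ is a translation of $A$ by a vector of $C^v_f$, hence a strongly regular hyperbolic element of $G$, which completes the proof.
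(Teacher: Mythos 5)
Your overall strategy is close to the paper's: both proofs use the panel trees of Lemma~\ref{lem:TreeWalls} together with (LST) and Lemma~\ref{lem:OppPanels} to produce elements of $\Stab_G(A)$ attached to each simple root, and both finish by combining these via the trichotomy for generalized Cartan matrices (Kac, Th.~4.3) to land in $C^v_f$, which is exactly where the no-affine-factor hypothesis enters. Your preliminary steps are also sound: transitivity of the line stabilizer on the edges of $\ell_i$ does force a nontrivial translation of $\ell_i$ in $G_{\sigma_i,\sigma_i'}$, such an element stabilizes $A$, its linear part lies in $\{1,r_{\alpha_i}\}$ and is trivial because it fixes both ends of $\ell_i$, and $\alpha_i(u_i)\neq 0$ because the vertices of the panel tree are the walls of direction $\ker\alpha_i$.

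However, there is a genuine gap exactly at the step you flag as "the crux". The translations $t_{u_i}$ you extract satisfy $\alpha_i(u_i)>0$, but the transverse components of $u_i$ along $\ker\alpha_i$ -- equivalently the off-diagonal values $\alpha_j(u_i)$, $j\neq i$ -- are completely uncontrolled by the tree construction. The system $\sum_i m_i\,\alpha_j(u_i)>0$ then need not be solvable: already for $|I|=2$, if $\alpha_1(u_2)$ and $\alpha_2(u_1)$ are sufficiently negative relative to the diagonal entries, no choice of $m_1,m_2$ (integer or real) works. Your proposed remedy, "arrange the axes $\ell_i$ so that the $u_i$ are proportional to $\alpha_i^\vee$", is not an argument -- nothing in (LST) or the tree picture lets you prescribe the transverse part of a translation axis. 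This is precisely what the paper's proof is engineered to circumvent: instead of translations, it extracts from each panel tree a \emph{reflection-translation}, i.e.\ an element of $\Stab_G(A)$ swapping the two half-apartments bounded by a wall $h$ of direction $\ker\alpha_i$ (acting on the tree line as the symmetry through the vertex corresponding to $h$), whose composite with $r_h$ is a translation \emph{parallel} to $h$. Composing two reflection-translations attached to two parallel walls $h_1,h_2$ of direction $\ker\alpha_i$ yields a translation whose $\ker\alpha_i$-component is the (bounded) sum of the two parallel parts, while its $\alpha_i^\vee$-component grows with the distance between $h_1$ and $h_2$; taking the walls far apart makes the translation vector as close to collinear with $\alpha_i^\vee$ as desired, and only then does the Kac--Vinberg step go through. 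You would need to import this mechanism (or an equivalent way of killing the transverse components, e.g.\ producing an element of $\Stab_G(A)$ with linear part $r_i$ and forming a commutator) to close the argument.
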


\begin{remark}
\label{rem::AGT} As stated, the theorem fails when $\QD$ is of affine type: we may consider a split loop group $G$ over a non-Archimedean local field, acting on its hovel. It acts strongly transitively on $\partial\QD$, but the smallest positive imaginary root $\qd\in\QF^+_{im}$ is trivial on the maximal torus $T$. So all translations induced by $\Stab_G(A)$ on the corresponding apartment $A$ have their corresponding translation vectors in the boundary $\ker(\qd)$ of the Tits cone: there is no strongly regular element.
 
 \par It will be clear from the proof of the theorem that the conclusion of the theorem is true even when $\A^v$ may have an irreducible factor of affine type, if we add the following hypothesis for the action of $G$:
 
 \medskip\par (AGT) For some (and hence for any) apartment $A$ of the hovel $\Delta$, its stabilizer $\Stab_G(A)$ in $G$ contains an element which induces an \textbf{affinely generic translation}, i.e., a translation whose corresponding translation vector $\vect v$ satisfies $\qd(\vect v)\neq 0$ for any imaginary root of an affine component of $\QF\cup\QF_{im}$.
\end{remark}

\begin{proof}[Proof of Theorem~\ref{thm:ExistenceStronglyReg}]
As $G$ is strongly transitive on the twin building at infinity of $\Delta$, we have that $G$ acts transitively on the set of all apartments of $\Delta$.
If 

We start with a preliminary observation. Let $A$ be an apartment of $\Delta$ and let $H, H'$ be two complementary half-apartments of $A$. We claim that there is some $g \in \Stab_G(A)$ which swaps $H$ and $H'$; in particular $g$ stabilizes the common boundary wall $\partial H = \partial H'$. 

In order to prove the claim, choose a pair of opposite panels $\sigma, \sigma'$  at infinity of the wall $\partial H$. Notice that it makes sense to consider panels at infinity since $\Delta$ is not a tree, and thus the twin building $\partial \Delta$ has positive rank. By Lemma~\ref{lem:OppPanels}, the stabilizer $G_{\sigma, \sigma'}$ acts $2$-transitively on $\Ch(\sigma)$. We now invoke Lemma~\ref{lem:TreeWalls} which provides a canonical isomorphism $P(\sigma, \sigma') \cong (T(\sigma, \sigma'), \sha(\sigma, \sigma')) \times \RR^{n-1}$, where $n = \dim(\Delta)$ and $(T(\sigma, \sigma'), \sha(\sigma, \sigma'))$ is a thick tree. 
The set $\Ch(\sigma)$ being in one-one correspondence with $ \partial T(\sigma, \sigma')$, we infer that $G_{\sigma, \sigma'}$ is $2$-transitive on $\partial T(\sigma, \sigma')$. \red{Recall that $G_{\sigma, \sigma'}$ is strongly transitively on $(T(\sigma, \sigma'), \sha(\sigma, \sigma'))$ by \blue{the hypothesis (LST)}. We emphasize that one cannot apply Caprace--Ciobotaru~\cite[Corollary~3.6]{CaCi} as in our case the apartment system $\sha(\sigma, \sigma')$ is not necessarily complete \blue{if $\sha$ is not locally complete}.}
Therefore, if $v$ (resp., $D$) denotes the vertex (resp., geodesic line) of $T$ corresponding to the wall $\partial H$ (resp., the apartment $A$), we can find some $g \in \Stab_{G_{\sigma, \sigma'}}(A)$ stabilizing $D$ and acting on it as the symmetry through $v$. It follows that $g$ stabilizes $A$, the wall $\partial H  = \partial H'$ and swaps the two half-apartments $H$ and $H'$. This proves our  claim. (We warn the reader that $g$ might however act non-trivially on the wall $\partial H$.)  

\medskip
Using the above proven fact, we claim that for every root $\alpha \in \QF$, we can construct a translation in $\Stab_G(A)$ of translation vector that is almost collinear to $\alpha^\vee$. 

Indeed, let $h$ be a wall of $A$ and denote by $H,H'$ the corresponding two complementary half-apartments of $A$ such that $\partial H = \partial H'=h$. Let $g \in \Stab_{G}(A)$  that swaps $H$ and $H'$ and stabilizes the wall $h$ together with a panel $\sigma$ at infinity of $h$. 
We have that  $r_h\circ g$ is a vectorially Weyl-automorphism of $A$ and stabilizes the two chambers of $\partial A$ containing $\sigma$, where $r_h \in  \Stab_{\Aut(\Delta)}(A) \leq \Aut(\Delta)$ denotes the reflection with respect to the wall $h$. We conclude that $r_h\circ g$ is a translation in $\Stab_{\Aut(\Delta)}(A)$. 
As $g$ and $r_h$ stabilize $h$, the element $r_h\circ g$ is a translation parallel to $h$. We call such an element $g  \in \Stab_{G}(A)$ a \textbf{reflection-translation} of wall $h$.

\medskip
We remark the following. Fix two different walls $h_1$ and $h_2$ of the apartment $A$ of direction $\ker\alpha$ and let $g_1, g_2 \in \Stab_{G}(A)$ be two reflection-translations corresponding respectively, to the walls $h_1$ and  $h_2$. We have that $g_1 \circ g_2 \in \Stab_{G}(A)$ is a translation element whose translation vector is not in $\ker\alpha$. 
In particular, we notice that the projection 
of the translation vector of $g_1 \circ g_2$ in the direction $\ker\alpha$ equals the sum of the translation vectors  of $g_1$ and $g_2$. 
Moreover, the projection of the translation vector of $g_1 \circ g_2$ in the direction of $\alpha^\vee$ depends on the euclidean distance between the walls $h_1$ and $h_2$.

 Let us denote $\gamma: =g_1 \circ g_2 $. For every reflection-translation $g \in \Stab_{G}(A)$ of wall $h_g$ of direction $\ker\alpha$, we have that $\gamma^n g \gamma^{-n} \in \Stab_{G}(A)$ is a reflection-translation of wall $\gamma^n (h_{g})$ and whose translation vector equals the translation vector of $g$. 
 We conclude that every wall of direction $\ker\alpha$ admits a reflection-translation in $\Stab_{G}(A)$ whose translation vector is bounded  independently of the wall. To conclude our last claim, one can choose two walls $h_1$ and $h_2$ of direction $\ker\alpha$ that are very far away and two reflection-translations $g_1, g_2 \in \Stab_{G}(A)$ corresponding respectively to $h_1$ and $h_2$. 
 By the remark above, we have that $g_1 \circ g_2 \in \Stab_{G}(A)$ is a translation and the projection of its translation vector in the direction  $\alpha^\vee$ can be made very big, depending on the distance between $h_1$ and $h_2$. 
 In particular, the translation vector of $g_1 \circ g_2$ can be made almost collinear to $\alpha^\vee$ as we want. The claim follows.
 
Finally, to construct a strongly regular hyperbolic element, we choose a base of roots 
 $(\qa_i)_{i\in I}$.
By the hypothesis on the type of $(V,W^v)$, we get some $\Z-$linear combination $\sum_{i\in I}\,n_i\qa_i^\vee$ which is in $C^v_f$, see Kac~\cite[Th.~4.3]{K90}.
But from above, we get a fixed neighborhood $U$ of $0$ in $V$ and elements $g_{i,n}\in \Stab_G(A)$ inducing translations with vector in $nr_i\qa_i^\vee+U$, for every $i \in I$ and some $r_i\in(0,+\infty)$.
Therefore, for some $N_i$ big (with all $N_ir_i/n_i$ almost equal), the product $\prod_{i\in I}\,g_{i,N_i}$ (in any order) will be the desired hyperbolic element.
\end{proof}

\begin{remark}
\label{rem::semi-discrete}
The hypothesis that $\Delta$ is a semi-discrete hovel is necessary in order to apply, along the proof of Theorem~\ref{thm:ExistenceStronglyReg}, Corollary~3.6 from Caprace--Ciobotaru~\cite{CaCi}. This corollary regards only thick trees and not real trees.
\end{remark}


\begin{proposition}(See~\cite[Proposition~2.12]{CaCi})
\label{prop::dynamics_str_reg}
Let $\Delta$ be an affine ordered hovel and let $\gamma \in \Aut(\Delta)$ be a vectorially Weyl strongly regular hyperbolic element of translation apartment $A$. Let $c_{-} \in \Ch(\bd A)$ be the unique chamber at infinity that contains the repelling point of $\gamma$ in its interior. 

Then for every $c \in \Ch(\bd \Delta)$ the limit $\lim\limits_{n \to \infty} \gamma^{n}(c)$ exists with respect to the cone topology on $\Ch(\bd \Delta)$ and coincides with the retraction of $c$ onto $A$ based at the chamber $c_{-}$. In particular, the fixed-point-set of $\gamma$ in $\Ch(\bd \Delta)$ is the set $\Ch(\bd A)$.
\end{proposition}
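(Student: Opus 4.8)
The plan is to reduce the convergence statement in $\Ch(\bd \Delta)$ to the corresponding statement inside the single apartment $A$, using the retraction $\rho_{A,c_-}$ and the cone topology built from standard open neighborhoods $U_{x,r,c}$. First I would fix a base point $x \in A$ and recall, via Proposition~\ref{prop::independence_base_point}, that the cone topology does not depend on this choice, so all convergence may be tested through neighborhoods based at $x$. Let $c \in \Ch(\bd \Delta)$ be arbitrary and set $d := \rho_{A,c_-}(c) \in \Ch(\bd A)$; here I use that the retraction based at the repelling chamber $c_-$ carries ideal chambers to ideal chambers of $A$, since $\rho_{A,c_-}$ fixes $A$ pointwise and commutes with passage to infinity. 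The target of the proof is then to show $\gamma^n(c) \to d$ in $\Top_x(\Ch(\bd \Delta))$.

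The key geometric step is to track the sector $Q_{x,\gamma^n(c)} = \gamma^n(Q_{\gamma^{-n}(x),c})$ and to compare it with $Q_{x,d}$ through the retraction. Since $\gamma$ is strongly regular with translation axis $A$ and repelling chamber $c_-$, its contracting dynamics pushes everything toward the attracting end; concretely, for any fixed gate $r \in [x,\xi_d)$ I expect that for all large $n$ the sectors $Q_{x,\gamma^n(c)}$ and $Q_{x,d}$ share the enclosure $cl([x,r])$, which is exactly the condition $\gamma^n(c) \in U_{x,r,d}$. To make this precise I would work with a point $y$ far out in the interior of $Q_{x,c}$, apply $\gamma^n$, and use that $\gamma$ translates along $A$ by a vector lying in the open fundamental chamber $C^v_f$ (genericity of the axis), so that $\gamma^n(y)$ is eventually swept into the cone over any prescribed gate $r$. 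The retraction $\rho_{A,c_-}$ and the apartment axioms (MA3), (MA4) then guarantee that the relevant initial segment of the ray toward $\gamma^n(c)$ lies in $A$ and agrees with the corresponding segment of $[x,\xi_d)$.

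The main obstacle is the absence of a genuine geodesic metric on $\Delta$, which is exactly why the proof cannot be imported verbatim from the \cat setting of Caprace--Ciobotaru~\cite[Prop.~2.12]{CaCi}. In the building case one would estimate distances and angles directly; here I must replace every metric estimate by a combinatorial/order-theoretic one expressed through enclosures $cl_A(\cdot)$, the preorder $\leq$, and the retraction $\rho_{A,c_-}$. The crucial lemma to establish is that for $c$ lying (at infinity) in the ``basin of attraction'' of the attracting end, the image $\gamma^n(Q_{x,c})$ eventually contains, up to retraction onto $A$, an arbitrarily large truncated cone $cl([x,r]) \subset Q_{x,d}$; the strong regularity of $\gamma$ is what forces the transverse directions to contract so that no part of the sector escapes the neighborhood $U_{x,r,d}$. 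I would isolate this as the heart of the argument and then feed it into the neighborhood criterion.

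Finally, the last sentence of the statement follows almost formally once convergence is established. If $c \in \Ch(\bd A)$, then $\rho_{A,c_-}(c) = c$ since the retraction fixes $A$, so the limit of $\gamma^n(c)$ equals $c$; combined with the fact that $\gamma$ stabilizes $A$ and hence fixes each of its ideal chambers (because $\gamma$ acts on $A$ as a translation), this shows $\Ch(\bd A) \subseteq \Fix_{\Ch(\bd \Delta)}(\gamma)$. Conversely, if $\gamma(c) = c$ then by uniqueness of limits $c = \lim \gamma^n(c) = \rho_{A,c_-}(c) \in \Ch(\bd A)$, giving the reverse inclusion and thus equality $\Fix_{\Ch(\bd \Delta)}(\gamma) = \Ch(\bd A)$.
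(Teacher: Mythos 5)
Your overall strategy --- test convergence through the neighborhoods $U_{x,r,d}$ with $d=\rho_{A,c_-}(c)$, reduce everything to a statement about sectors, and deduce the fixed-point set at the end --- is the right one, and the final paragraph is essentially fine modulo the unaddressed fact that the cone topology separates points (needed for ``uniqueness of limits''). But the paper gives no argument here (it defers to \cite{CaCi}), so the burden is on you to supply the key step, and that is exactly where the proposal stops: the assertion that for every gate $r\in[x,\xi_d)$ one has $\gamma^n(c)\in U_{x,r,d}$ for all large $n$ is announced as ``the crucial lemma to establish'' but never proved. Moreover, the mechanism you gesture at --- push a point $y$ far inside $Q_{x,c}$ by $\gamma^n$ and argue it is ``swept into the cone over $r$'' --- is the metric argument from the $\CAT(0)$ building case and does not transfer: $\gamma^n(y)$ lives in the apartment $\gamma^n(A')$, and you have no control on its position relative to $A$ until you already know that $A\cap\gamma^n(A')$ is large, which is precisely what has to be shown.

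The missing engine is the following. Choose (by (MA3)) an apartment $A'$ containing the sector-germ of direction $c_-$ and a sector of direction $c$; then $A\cap A'$ contains a sector $Q_{y,c_-}$, and one may take the test point $x$ inside it. Since $\gamma$ fixes $c_-$ and translates $A$ by a vector $v$ interior to the vector chamber opposite that of $c_-$, one gets $\gamma^n(Q_{y,c_-})=Q_{y+nv,c_-}\subset A\cap\gamma^n(A')$ and $\bigcup_{n}Q_{y+nv,c_-}=A$; hence $A\cap\gamma^n(A')$ eventually contains $cl_A([x,r])$ for any prescribed $r$. The retraction $\rho_{A,c_-}$ restricted to $\gamma^n(A')$ is an isomorphism onto $A$ fixing this common part pointwise, and $\rho_{A,c_-}\circ\gamma^n|_{A'}$ agrees with a translation of $A$ composed with $\rho_{A,c_-}|_{A'}$ on the germ of $Q_{y,c_-}$, whence $\rho_{A,c_-}(\gamma^n(c))=d$ for all $n$. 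Combining these two facts shows that $Q_{x,\gamma^n(c)}$ and $Q_{x,d}$ coincide on $cl_A([x,r])$, which is exactly the membership $\gamma^n(c)\in U_{x,r,d}$. Without this (or an equivalent) argument the proposal remains a plan rather than a proof.
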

\begin{proof}
The proof goes in the same way as in Caprace--Ciobotaru~\cite[Proposition~2.10]{CaCi}.
\end{proof}


\section{Main theorem}
\label{sec::main_theorem}

\begin{theorem}
\label{thm::main_theorem}
Let $(\Delta, \sha)$ be a semi-discrete thick, affine ordered hovel \red{that is not a tree.} 
Let $G$ be a vectorially Weyl subgroup of $\Aut(\Delta)$ \red{satisfying condition (LST)} and with the property that for every $c \in \Ch(\partial \Delta)$ the $G^{0}_c$--orbits on $\Opp(c)$ are closed with respect to the cone topology on $\Ch(\partial \Delta)$. 
If $\QD$ has some factors of affine type, we ask moreover that $G$ satisfies  the condition (AGT) of~\ref{rem::AGT}. 
Then the following are equivalent:

\begin{enumerate}[(i)]
\item
\label{thm:main-thm-i}
$G$ acts strongly transitively on $\Delta$;

\item
\label{thm:main-thm-ii}
$G$ acts strongly transitively on the twin building at infinity $\partial \Delta$.
\end{enumerate}
\end{theorem}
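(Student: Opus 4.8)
The plan is to prove the equivalence by establishing the two implications separately, leaning on the criterion for strong transitivity from Proposition~\ref{lem:ST:criterion}, which reduces strong transitivity on $\Delta$ to the statement that every sector-germ $c$ is \emph{good}, \ie that the pointwise stabilizer $G^0_c$ acts transitively on the set of all apartments of $\Delta$ whose boundary contains $c$.

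\medskip
\emph{The implication (i) $\Rightarrow$ (ii).} This direction I expect to be routine. If $G$ acts strongly transitively on $\Delta$, then $G$ acts transitively on the apartments of $\Delta$ and, for a fixed apartment $A$, the stabilizer $\Stab_G(A)$ acts transitively on the local chambers of $A$. Passing to infinity, each apartment $A$ produces a twin apartment $\partial A = (\partial^- A, \partial^+ A)$, and each local chamber of $A$ produces an ideal chamber (sector-germ) in $\Ch(\partial A)$. Since every twin apartment of $\partial\Delta$ and every ideal chamber arises in this way, the transitivity statements transfer directly: $G$ is transitive on twin apartments and $\Stab_G(A)$ is transitive on $\Ch(\partial^+A)$. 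Thus $G$ acts strongly transitively on $\partial\Delta$.

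\medskip
\emph{The implication (ii) $\Rightarrow$ (i).} This is the substantive direction. By Proposition~\ref{lem:ST:criterion} it suffices to show that for an arbitrary ideal chamber $c \in \Ch(\partial\Delta)$, the group $G^0_c$ is transitive on the set of apartments containing $c$. Fix such a $c$ and two apartments $A, A'$ with $c \in \Ch(\partial A) \cap \Ch(\partial A')$; I must produce an element of $G^0_c$ carrying $A$ to $A'$. Choose in $A$ an ideal chamber $c_-$ opposite $c$; then $A$ is the unique apartment containing the sector-germs $c$ and $c_-$. The idea, following Caprace--Ciobotaru, is to use a strongly regular hyperbolic element to dynamically ``straighten'' $A'$ onto $A$. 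By Theorem~\ref{thm:ExistenceStronglyReg} (whose hypotheses are exactly the (LST) condition, semi-discreteness, non-triviality as a tree, and, in the affine case, condition (AGT)), the group $G$ contains a strongly regular hyperbolic element $\gamma$; conjugating and using strong transitivity on $\partial\Delta$, I may arrange that the translation axis of $\gamma$ lies in $A$ and that its attracting and repelling ideal chambers are $c$ and $c_-$ respectively, so that $\gamma \in G^0_{c}$ fixing the relevant sector-germs after suitable adjustment. Now consider the ideal chamber $d' \in \Ch(\partial A')$ opposite $c$ inside $A'$. By Proposition~\ref{prop::dynamics_str_reg}, applied with repelling chamber $c_-$, the sequence $\gamma^n(d')$ converges in the cone topology to $\rho_{A,c_-}(d')$, which is an ideal chamber of $A$ opposite $c$. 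Because $G$ is strongly transitive on $\partial\Delta$, Lemma~\ref{lem:Levi} tells us that each $G^0_c$--orbit on $\Opp(c)$ is invariant under $G_{c,c'}$, and by hypothesis these orbits are \emph{closed} in the cone topology; hence the limit point $\rho_{A,c_-}(d')$ lies in the same $G^0_c$--orbit as $d'$. This yields an element of $G^0_c$ sending $d'$ to an ideal chamber of $A$ opposite $c$, and thus, by uniqueness of the apartment containing a pair of opposite sector-germs together with axiom (MA2)/(MA4), sending $A'$ into an apartment sharing both $c$ and an opposite chamber with $A$, which must be $A$ itself.

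\medskip
The main obstacle is precisely the passage through the \emph{closedness} hypothesis on $G^0_c$--orbits: in the building case this closedness comes for free from the local compactness of $\Aut(\Delta)$ and the closedness of $G^0_c$ in $G$, but for hovels there is no ambient topology on $\Aut(\Delta)$, so the hypothesis must be imposed and used carefully. Concretely, I must verify that the cone-topology limit of the orbit $\{\gamma^n(d')\}$ genuinely lands in the $G^0_c$--orbit of $d'$, and that this limit is an ideal chamber \emph{of $A$} opposite $c$ rather than merely an abstract boundary point; this is where Proposition~\ref{prop::dynamics_str_reg} (identifying the limit with the retraction $\rho_{A,c_-}(d')$) and the base-point independence of the cone topology (Proposition~\ref{prop::independence_base_point}) do the essential work. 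Care is also needed in the affine-type case to ensure a strongly regular element exists at all, which is why condition (AGT) is assumed; once $\gamma$ is in hand, the dynamical argument is uniform.
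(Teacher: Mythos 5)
Your overall plan matches the paper's, but two steps do not survive scrutiny. The first is (i) $\Rightarrow$ (ii), which you dismiss as routine by asserting that strong transitivity on $\Delta$ gives transitivity of $\Stab_G(A)$ on the local chambers of $A$. That is not the definition used here: strong transitivity on a hovel is Definition~\ref{4.2} (the isomorphisms in (MA2)/(MA4) are induced by $G$), equivalently every local chamber or sector-germ is \emph{good} (Proposition~\ref{lem:ST:criterion}), and none of this says anything about moving one local chamber of $A$ to another \emph{inside} $A$. The actual content of this direction is to show that $\Stab_G(A)$ induces the affine Weyl group $W^a$ on $A$: the paper does this by producing, for each wall $h$ of $A$, an element of $\Stab_G(A)$ acting as the reflection $r_h$, via a two-step swap of half-apartments that uses thickness (the existence of a third half-apartment $H_3$ with $H_1\cap H_3=H_2\cap H_3=h$) together with the goodness of $H_1=cl_{A_2}(germ(Q_1),\mathfrak f_2)$ coming from (MA4). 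Your sketch omits this entirely, and in particular never uses thickness in this direction.

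The second problem is in (ii) $\Rightarrow$ (i): your orientation of $\gamma$ is wrong, and the error is load-bearing. You take $c$ attracting and $c_-$ repelling, so Proposition~\ref{prop::dynamics_str_reg} gives $\gamma^n(d')\to\rho_{A,c_-}(d')$, the retraction \emph{centered at $c_-$}. For $d'\in\Opp(c)$ with $d'\neq c_-$, this retraction sends $d'$ to the chamber of $\partial A$ at the same Weyl distance from $c_-$ as $d'$, and since $c_-$ is the \emph{unique} chamber of $\partial A$ opposite $c$, that limit is not opposite $c$. So your assertion that the limit ``is an ideal chamber of $A$ opposite $c$'' fails, and because the limit then lies outside $\Opp(c)$, the hypothesis that orbits are closed \emph{in} $\Opp(c)$ gives you nothing. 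The paper instead takes $c$ to be the \emph{repelling} chamber and $c'$ attracting: the relevant retraction is $\rho_{A,c}$, which collapses all of $\Opp(c)$ onto $c'\in\Opp(c)$, so every closed $G^0_cG_{c,c'}$-orbit contains $c'$, there is a single such orbit, and Lemma~\ref{lem:Levi} identifies the $G^0_cG_{c,c'}$-orbits with the $G^0_c$-orbits. Replacing your $\gamma$ by $\gamma^{-1}$ repairs this step; as written it does not go through. The rest of your argument for this direction (Theorem~\ref{thm:ExistenceStronglyReg}, the closedness hypothesis, uniqueness of the apartment containing two opposite sector-germs, and the criterion of Proposition~\ref{lem:ST:criterion}) is the paper's argument.
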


\begin{remark} 
\label{rem::semi-discretness}
The semi-discreteness is used only for (ii) $\Rightarrow$ (i).
\end{remark} 

\begin{proof}[Proof of Theorem~\ref{thm::main_theorem}] (i) $\Rightarrow$ (ii) 
\medskip
By the axiom (MA3), it is clear that $G$ is transitive on the set of all apartments of $\Delta$. It remains to prove that the stabilizer $\Stab_{G}(A)$ of an apartment $A$ of $\Delta$ is transitive on the set of $\Ch(\partial A)$. To obtain this it is enough to prove that the affine Weyl group $W^{a}$ is contained in $\Stab_{G}(A)$. 

First, consider three half-apartments $H_1,H_2, H_3 \subset \Delta$ that have a wall $h$ in common and such that $H_i \cap H_j=h$, for every $i\neq j$. We claim there exists an element $g \in G$ such that $g$ fixes $H_1$ and $g(H_2)=H_3$. In particular, we obtain that $g(A_2)=A_3$, where $A_2=H_1\cup H_2$ and $A_3=H_1 \cup H_3$. Indeed, let $Q_1$ be a sector in $H_1$ such that it admits a sector-panel-germ at infinity of $h$, which is denoted by $\mathfrak{f}_1$. Consider at infinity of $h$ a sector-panel-germ $\mathfrak{f}_2$ that is opposite $\mathfrak{f}_1$. Let $Q_2$ (resp., $Q_3$) be a sector in $H_2$ (resp., $H_3$) that contains $\mathfrak{f}_2$ at infinity. One notices that $Q_1$ and $Q_2$ (resp., $Q_3$) are of opposite direction in $A_2$ (resp., $A_3$). We apply axiom (MA4) for the $germ(Q_1)$ and $\mathfrak{f}_2$ and to the apartments $A_2$ and $A_3$. Then, there exists a Weyl-isomorphism $g \in G$ such that $g(A_2)=A_3$ and $g$ is fixing pointwise $cl_{A_2}(germ(Q_1),\mathfrak{f}_2)$. Notice that $ H_1 = cl_{A_2}(germ(Q_1),\mathfrak{f}_2)$; therefore, $g(H_1)=H_1$ pointwise. The claim is proved.

Let now $h$ be a wall of the apartment $A$ and denote by $H_1$, $H_2$ the half-apartments of $A$ such that $\partial H_1=\partial H_2=h$. We claim that there exists $g \in  W^{a} \leq \Stab_{G}(A)$ such that $g$ is a reflection with respect to the wall $h$. Indeed, as the hovel is thick, by Rousseau~\cite[Prop.~2.9]{Rou11} there exists a third half-apartment $H_3 \subset \Delta$ such that $H_1 \cap H_3= H_2 \cap H_3$. From the above claim, applied twice, we obtain an element $g \in \Stab_{G}(A)$ with the desired properties. In particular, as $g$ fixes pointwise the wall $h$, we have that $g \in W^{a}$ and the conclusion follows. 

\medskip \noindent 
(ii) $\Rightarrow$ (i) 

Let $c \in \Ch(\partial \Delta)$, $c' \in \Opp(c)$ and denote by $A$ the unique apartment in $\Delta$ whose boundary contains $c$ and $c'$.

As $G$ is strongly transitive on $\partial \Delta$, by Theorem~\ref{thm:ExistenceStronglyReg} we have that $G$ contains a strongly regular hyperbolic element. As $G$ acts transitively on the set of all apartments of $\Delta$, we conclude that every apartment in $\Delta$ admits a strongly regular hyperbolic element in $G$. Moreover, there exists a strongly regular hyperbolic element $\gamma$ of $ G_{c,c'} \leq G$ such that $c$, respectively $c'$, is the unique chamber at infinity that contains in its interior the repelling, respectively the attracting, point of $\gamma$.

Applying Proposition~\ref{prop::dynamics_str_reg} to the strongly regular element $\gamma$ with its unique translation apartment $A$ we obtain that $c'$ is an accumulation point for every $G^0_{c} G_{c, c'}$--orbit in $\Opp(c)$, with respect to the cone topology on $\Ch(\partial \Delta)$. By our hypothesis, every $G^{0}_c$--orbit in $\Opp(c)$ is closed in the cone topology on $\Ch(\partial \Delta)$, and so is every $G^0_{c} G_{c, c'}$--orbit in $\Opp(c)$, by Lemma~\ref{lem:Levi}. Therefore, there is only one $G^0_{c} G_{c, c'}$--orbit in $\Opp(c)$. We conclude that $G_{c}=G^0_{c} G_{c, c'}$ and that $G^0_{c} $ is transitive on $\Opp(c)$. The desired conclusion follows from the criterion of strong transitivity given by Proposition~\ref{lem:ST:criterion}.

\end{proof}

\begin{bibdiv}
\begin{biblist}

 
\bib{BPGR14}{unpublished}{
author={Bardy-Panse, N.},
author={Gaussent, S.},
author={Rousseau, G.},
 title={Iwahori--Hecke algebras for Kac--Moody groups over local fields},
 note={arXiv:1412.7503},
 }

\bib{BrT72}{article}{
author={Bruhat, Fran\c cois},
author={Tits, Jacques},
title={Groupes r\'eductifs sur un corps local I, Donn\'ees radicielles valu\'ees},
journal={Publ. Math. Inst. Hautes \'Etudes Sci.},
 volume={41},
 date={1972}, 
 pages={5--251},
 }
 
\blue{
\bib{CaCi}{article}{
  author={Caprace, P-E.},
   author={Ciobotaru, C.},
   title={Gelfand pairs and strong transitivity for Euclidean buildings},
    journal={Ergodic Theory and Dynamical Systems},
 volume={35},
 number={4},
 pages={1056-1078},
 date={2015},
   doi={},}
}

\bib{Cha}{book}{
 author={Charignon, Cyril},
title={Immeubles affines et groupes de Kac--Moody, masures bord\'ees}, 
publisher={\'Editions universitaires europ\'eennes, Sarrebruck},
date={2011},
note={Th\`ese Nancy, 2 juillet 2010, http://tel.archives-ouvertes.fr/docs/00/49/79/61/PDF/these.pdf}
}

 \bib{GR08}{article}{
author={Gaussent, S.},
author={Rousseau, G.},
 title={Kac--Moody groups, hovels and Littelmann paths},
 journal={Annales Inst. Fourier},
 volume={58},
 pages={2605--2657},
 date={2008},
 }

\bib{GR14}{article}{
author={Gaussent, S.},
author={Rousseau, G.},
 title={Spherical Hecke algebras for Kac--Moody groups over local fields},
 journal={Ann. of Math.},
 volume={180, Issue 3},
 pages={1051--1087},
 date={2014},
 }
 
\bib{K90}{book}{
 author={Kac, Victor G.},
title={Infinite dimensional Lie algebras}, 
publisher={Cambridge University Press, Cambridge, third edition},
date={1990},
}

\bib{Parr00}{article}{
  author={Parreau, Anne},
   title={Immeubles affines: construction par les normes et \'etude
des isom\'etries},
 journal={in Crystallographic groups and their generalizations, Kortrijk (1999), Contemporary Math. (Amer. Math. Soc., Providence)},
volume={262},
   pages={263--302},
  date={2000},
}

\bib{Rou11}{article}{
  author={Rousseau, Guy},
   title={Masures affines}, 
   journal={Pure Appl. Math. Quarterly (in honor of J. Tits)},
   volume={7},
   number={3},
   pages={859--921},
 date={2011},
   doi={},
}

\blue{
\bib{Rou12}{unpublished}{
  author={Rousseau, Guy},
   title={Groupes de Kac--Moody d\'eploy\'es sur un corps local, 2 Masures
ordonn\'ees}, 
note={To appear in ``Bull. Soc. Math. France'', ArXiv [math.GR] 
1009.0135v2} }
  }
  
\bib{Rou13}{unpublished}{
  author={Rousseau, Guy},
   title={Almost split Kac--Moody groups over ultrametric fields}, 
note={preprint Nancy, February 2012, ArXiv [math.GR] 
1202.6232v1}
  }

\end{biblist}
\end{bibdiv}

\end{document}